\theoremstyle{plain}
\newtheorem{theorem}{Theorem}[section]
\newtheorem{proposition}[theorem]{Proposition}
\theoremstyle{definition}
\newtheorem{definition}[theorem]{Definition}
\newtheorem{remark}[theorem]{Remark}
\numberwithin{equation}{section}
\newcommand{\Hom}{\operatorname{Hom}}
\newcommand{\Q}{{\mathbb{Q}}}
\newcommand{\Z}{{\mathbb{Z}}}
\newcommand{\kk}{{\mathbf{k}}}
\newcommand{\id}{\operatorname{id}}         
\newcommand{\im}{\operatorname{im}}        
\newcounter{llistadepth}
\newenvironment{manlist}[1]{\addtocounter{llistadepth}{1}
      \edef\llistacontador{llista\romannumeral\the\value{llistadepth}}
      \list{({#1{\llistacontador}})}{\usecounter{\llistacontador}
      \def\makelabel##1{\hss\llap{##1}}
      \itemsep=2pt\parsep=0pt\topsep=3pt plus 1pt minus 1 pt}}{\endlist
      \addtocounter{llistadepth}{-1}}
\newenvironment{numlist}{\begin{manlist}{\arabic}}{\end{manlist}}
\author{Hossein Abbaspour}
\begin{document}

\address{Laboratoire Jean Leray, Universit\'e de Nantes,\\ 2, rue de la Houssini\`ere  \\ Nantes 44300, France\\
email:\,\tt{abbaspour@univ-nantes.fr}} 

\title{On the Hochschild homology of  open Frobenius algebras}

\maketitle

\begin{abstract} We prove that the shifted Hochschild chain complex $C_*(A,A)[m]$ of a symmetric open Frobenius algebra $A$ of degree $m$ has a natural homotopy coBV-algebra structure. As a consequence $HH_*(A,A)[m]$ and $HH^*(A,A^\vee)[-m]$ are respectively coBV and BV algebras. The underlying coalgebra and algebra structure may not be resp. counital and unital. We also introduce a natural homotopy BV-algebra structure on $C_*(A,A)[m]$ hence a BV-structure on $HH_*(A,A)[m]$. Moreover we prove that the product and coproduct on $HH_*(A,A)[m]$ satisfy the Frobenius compatibility condition i.e. $HH_*(A,A)[m]$ is an open Frobenius algebras.  If $A$ is commutative, we also introduce a natural BV structure on the shifted relative Hochschild homology $\widetilde{HH}_*(A)[m-1]$. We conjecture that the product of this BV structure is identical to the Goresky-Hingston\cite{GH} product on the cohomology of free loop spaces when $A$ is a commutative cochain algebra model  for $M$.
\end{abstract}
\tableofcontents
\section{Introduction}

There have been many important works on providing algebraic models for the string topology operations introduced by Chas-Sullivan (\cite{CS1,CS2}) and Cohen-Godin (\cite{CG}).  One approach is to use the Hochschild cohomology of closed Frobenius algebras \cite{CJ, Mer,Tradler, TZ,Kauf1, Kauf2, Me, WahWest}. In particular  F\'elix-Thomas \cite{FT2}  proved that  over rationals and for any closed simply connected manifold $M$ the Chas-Sullivan BV-algebra $H_*(LM)$ is isomorphic to  $HH^*(A):=HH^*(A,A^\vee)$ where $A$ is a finite dimensional model (i.e. closed Frobenius algebra) for the cochains algebra $M$. 

 In \cite{KS} Kontsevich-Soibelman constructed an action of the chains of moduli spaces of Riemann surfaces on the Hochschild complex of a closed Frobenius algebras. This is a special of Constello's
 theorem \cite{CostelloCY} for Calabi-Yau categories and induces  a natural BV and coBV structure on the the Hochschild homology  and the Frobenius compatibility  between the BV product and coBV coproduct.
 
 In this paper  we assume that  $A$ is a symmetric open Frobenius algebra (unital) therefore $A$  is not  necessarily endowed with a non-degenerate scaler product. Instead $A$ is equipped with a compatible pair of product  and coproduct of degree $m$. First we prove that the shifted Hochschild chain $C_*(A,A)[m]$  is naturally a homotopy coBV-algebra (Section 3) therefore $HH_*(A,A)[m]$ is a BV-algebra. Also as a consequence  $C^*(A,A)[-m]$  and $HH^*(A,A^\vee)$ are respectively homotopy BV-algebra and BV-algebra thus we recover Tradler's \cite{Tradler} result for the closed Frobenius algebras. In Section 4 we prove that $C_*(A,A)[m]$  has a natural homotopy BV-structure and $H_*(A,A)[m]$  is a BV-algebra. Moreover, in Section 5 we prove that the product and coproduct on $HH_*(A,A)[m]$ satisfy the Frobenius compatibility as well.  Such a compatibility was expected in the light of  Cohen-Godin work for the free loop spaces (of not necessarily closed manifolds).
 
 It is worth mentioning that $HH_*(A,A)$ is generally not a unital algebra (or equivalently $HH^*(A,A^\vee)$ is not counital), reflecting the fact 
 (in the geometric side) the free loop spaces are infinite dimensional manifolds thus their homology are not conunital.   We recall that a unit for  the Chas-Sullivan algebra on $H*(LM)$ exist if and only if the underlying manifold is closed manifold in which case  the cycle consisting of constant loops is the unit. Similarly  $HH_*(A,A)$ is not counital (or equivalently $HH^*(A,A^\vee)$ is not unital) because the underlying manifold of the cochain complex $A$ may not be  a closed compact one.

We also identify a natural BV-product on the shifted  relative Hochschild homology $HH_*(A,A)[m-1]$ . We believe  that this product is an algebraic model of the Goresky-Hingston \cite{GorHing} product on the relative cohomology $H^*(LM, M)$. As far as we know this product was not known even for closed Frobenius algebras.

Our results can potentially be used to give an algebraic model for the string topology of not necessarily closed manifolds.  That would require generalizing our results to an appropriate homotopic setting.  

Throughout this paper $\kk$ is a commutative ring and  $A=\kk\oplus \bar{A}$ is a positively graded augmented unital differential associative $\kk$-algebra with $\deg d_A=+1$, $\bar{A}=A/\kk$ i.e.  $\bar{A}$ is the kernel of the augmentation $\epsilon: A \rightarrow \kk$.  See the appendix for the sing conventions.

\textbf{Acknowledgment:} I am grateful to Nathalie Wahl for many helpful communications. 

\section{Open Frobenius algebras and BV-Algebras}
We use the sing conventions explained in the appendix. A differential graded  $(A,d)$-module, or $A$-module for short, is a $\kk$-complex $(M,d)$ together with an (left) $A$-module structure $\cdot: A\times M\rightarrow M$ such that $ d_M(ax)=d_A(a)x+(-1)^{|a|}a d_M(x)$. The multiplication map is of degree zro \emph{i.e.} $\deg(ax)=\deg a+\deg x $. In particular, the identity above implies that the differential of $M$ has to be of degree $1$.

Similarly for a $(M,d_M)$ a differential graded $(A,d)-bimodule$, we have
$$
d_M(axb)=d_A(a)xb+(-1)^{|a|}ad_M (x)b+(-1)^{|a|+|m|}axd_Ab,
$$
or equivalently, $M$ is a $(A^e:=A\otimes A^{op}, d_A\otimes 1 +1\otimes d_A)$ DG-module where $A^{op}$ is the algebra whose
underlying graded vector space is $A$ with the opposite multiplication of $A$, \emph{i.e.} $a\overset{op}{\cdot}b=(-1)^{|a|.|b|}b\cdot a$. All modules considered in this article are differential modules. We will also drop the indices from the differential when there is no possibility of confusion.

\begin{definition}(DG open Frobenius algebra)\label{defopfrob}.  A \textit{differential graded open Frobenius $\kk$-algebra} of degree $m$ is a triple $(A,\cdot,\delta)$  such that:
\begin{enumerate}
\item $(A,\cdot)$ is a unital differential graded associative algebra whose product has degree zero,
\item $(A, \delta)$ is a differential graded coassociative coalgebra  of degree $m$ that is $(\delta\otimes 1)\delta=(-1)^m(1\otimes \delta)\delta$ and $\delta$ is chain map of degree $m$, \emph{i.e.} $\delta d=(-1)^m(d\otimes 1+1\otimes d)\delta$ .

\item $\delta : A\rightarrow A\otimes A$ is a right and left differential $A$-module map. Using (simplified) Sweedler's notation, this reads
$$
\sum_{(xy)} (xy)'\otimes (xy)''=\sum_{(y)}(-1)^{m|x|} xy'\otimes y''= \sum_{(x)}  x'\otimes x''y.
$$
\end{enumerate}
Here we have simplified Sweedler's notation for the coproduct $ \delta x=\sum_{i} x'_i\otimes x''_i$, to   $ \delta x=\sum_{(x)} x'\otimes x''$ where $(x)$ should be thought of as the index set for $i$'s.  Since the coproduct is  assumed to have degree $m$ therefore $\deg x' +\deg x''=m$.

In particular we have
$$
\sum_{(dx)} (dx)'\otimes(dx)'' =(-1)^m ( \sum_{(x)} (dx'\otimes x''+(-1)^{|x'|} x'\otimes dx''))
$$
and
$$
\sum (x')'\otimes (x')''\otimes x''=\sum (-)^{m|x'|+m}x'\otimes (x'')'\otimes (x'')''
$$
We shall say $(A,\cdot,\delta)$ is symmetric if $ \sum_{(1)} 1'\otimes 1''= \sum_{(1)}(-1)^{|1'||1''|+m} 1''\otimes 1'$. 
\end{definition}

We recall that a closed (DG) Frobenius algebra is a finite dimensional unital associative differential graded $\kk$-algebra $A=\oplus_{i\geq 0} A_i$ equipped with a \emph{symmetric inner product}  $\langle -,- \rangle$ 
such that the map $ \alpha: x\mapsto (y\mapsto  \alpha_x(y):=(-1)^{|x|+m} \langle x, y \rangle=(-1)^{|y|} \langle x, y \rangle)  $, from $A$ to $A^\vee$ is a degree $m$ isomorphism of differential graded $A$-bimodules.  

We recall that symmetric means $$ \langle x,y \rangle=(-1)^{|x||y|}\langle y,x \rangle=(-1)^{|x|(m-|x|)}\langle x,y\rangle.$$

Notice that $\alpha$ is of degree $m$ therefore $\alpha$ being $A$-biequivarant must take into account the degree.  We spell this out in details since it is important to get the signs right. 

Let $L:A\otimes A^\vee\to A^\vee$ and $R:A^\vee\otimes A\to A^\vee$ be respectively the left and right  action of $A$ on  $A^\vee$
We will use the same notation for the action of $A$ on itself. Then $\alpha$ being $A$-biequivariant means
$$
L(1\otimes \alpha)=\alpha \circ L
$$
and 
$$
R(\alpha \otimes 1)=\alpha \circ R,
$$
which after applying to elementa $x,y\in A$, it reads $(-1)^{m|y|}y\cdot \alpha_x=\alpha_{yx}$ and $  \alpha_x\cdot y  =\alpha_{xy}$.  
It follows from the definition of closed Frobenius algebras that the inner product is invariant
$$
\langle xy,z \rangle=\langle x,yz\rangle,
$$ 
and symmetric and 
 \begin{equation}\label{eqclosed}
  \langle dx,y \rangle=-(-1)^{|x|}\langle x,dy \rangle.
 \end{equation}
  
We can now define a coproduct $\delta: A\to A\otimes A$ by requiring that the diagram

\begin{equation}\label{diagfrob}
\xymatrix{ A\ar[r]^{\alpha}\ar[dd]^-{\delta} & A^\vee \ar[rr]^-{\text{dual of the product}} &&(A \otimes A)^\vee \\
&  A^\vee\otimes A^\vee \ar[urr]_-{i_{A,A}}&  &\\
A\otimes A \ar[ur]_-{\alpha\otimes \alpha}}
\end{equation} 
to be commutative. Note that in the diagram above $\alpha$, $\alpha\otimes\alpha$ and  $i_A\otimes i_A$ (because $dimA <\infty$) are isomorphisms therefore $\delta$  exists and is unique because of  the non-degeneracy of the inner product. The coproduct $\delta x=\sum_{(x)}x'\otimes x''$  is characterized by the identity

 \begin{equation}\label{eqclosedco}
\langle x,ab \rangle =  \sum_{(x)} (-1)^{m |x'|} \langle x'',a \rangle \langle x',b \rangle \end{equation}
Since the inner product has  degree $m$, we  obtain
 \begin{equation}
\langle x,ab \rangle =(-1)^{m|b|+m}  \sum_{(x)}  \langle x'',a \rangle \langle x',b \rangle, \end{equation}
which in the special case  $x=1$ it reads,

 \begin{equation}\label{eqclosedco3}
\langle a,b \rangle=  \langle 1,ab \rangle =(-1)^{m|b|+m}  \sum_{(1)}  \langle 1'',a \rangle \langle 1',b \rangle \end{equation}

The coproduct $\delta$  is coassociative of degree $m$ and satisfies condition (3) of Definition \ref{defopfrob} because all the other maps in the diagram (\ref{diagfrob}) are morphisms of $A$-bimodules.
We can also check this directly,
\begin{equation}\begin{split}
\sum(-1)^{m|(xy)'|} \langle  (xy)'',a \rangle \langle (xy)',b \rangle= \langle xy,ab \rangle = \langle x,yab \rangle \\
= \sum (-1)^{m |x'|} \langle x'',ya \rangle \langle x',b \rangle\\
=\sum (-1)^{m|x'|} \langle x''y,a \rangle \langle x',b \rangle
\end{split}
\end{equation}
which together with the non-degeneracy of the inner product imply
$$
\sum_{(xy)} (xy)'\otimes (xy)''=\sum_{(y)}x'\otimes x'' y,
$$
Similarly 
\begin{equation}\begin{split}
\sum(-1)^{m|(xy)'|} \langle  (xy)'',a \rangle \langle (xy)',b \rangle= \langle xy,ab \rangle = (-1)^{|x|(m-|x|)}\langle y,abx \rangle \\
= (-1)^{|x|(m-|x|)}\sum (-1)^{m |y'|} \langle y'',a \rangle \langle y',b x\rangle\\
=\sum (-1)^{m|y'|} \langle y'',a \rangle \langle xy',b \rangle=\sum (-1)^{m|x|}(-1)^{m|xy'|} \langle y'',a \rangle \langle xy',b \rangle
\end{split}
\end{equation}
therefore
$$\sum_{(xy)} (xy)'\otimes (xy)''=\sum_{(y)}(-1)^{m|x|}xy'\otimes y'' $$
In other words a closed Frobenius algebra over a field is also an open Frobenius algebra. 

By replacing  $b=1$ in (\ref{eqclosedco}), we obtain

 \begin{equation*}
 \begin{split}
\langle x,a \rangle & =\langle x,a1  \rangle=  \sum_{(x)}(-1)^{m|x'|}  \langle x'',a \rangle \langle x',1 \rangle\\&
=\sum_{(x)} (-1)^{m|x'|}  \langle  \langle x',1 \rangle  x'',a \rangle.
\end{split}
\end{equation*}
 which implies 
  \begin{equation}
 x= \sum_{(x)} (-1)^{m|x'|} \langle x',1 \rangle  x''.
 \end{equation}
Similarly by taking  for $a=1$, we obtain

 \begin{equation*}
 \begin{split}
\langle x,b \rangle & =\langle \sum (-1)^{m|x'|} \langle x'',1 \rangle  x',b \rangle.
\end{split}
\end{equation*}
The non-degeneracy of the inner product implies that
 \begin{equation}
 x=\sum_{(x)}  (-1)^{m|x'|}  \langle x'',1 \rangle  x'
  \end{equation}
In other words $\eta(x)=\langle x,1\rangle$ is a counit  that is 
\begin{equation}\label{eq-cou}
x=\sum_{(x)}  (-1)^{m|x'|}\eta(x')x''=\sum_{(x)}  (-1)^{m|x'|}\eta(x'')x'
\end{equation}
\bigskip
Again using the Frobenius property  (or a diret computation) we have 

\begin{equation}\label{eq-cour}
x=\sum_{(1)}  (-1)^{m|1'|}\eta(x1')1''=\sum_{(1)}  (-1)^{m|1'|}\eta(1''x)1'
\end{equation}
and 
\begin{equation}\label{eq-cour1}
x=\sum_{(1)}  (-1)^{|1'|}\eta(1'x)1''=\sum_{(1)}  (-1)^{|1'|}\eta(x1'')1'
\end{equation}

\noindent \textbf{Example 1:} An important example of symmetric open Frobenius algebra is the cohomology with compact support  $H_c^*(M,\Z)$ of an oriented $n$-dimensional manifold $M$  (not necessarily closed).  
Note that $H_c^*(M,\Z)$  is equipped with the usual cup product and, using the Poincar\'e duality isomorphism (see~\cite[Theorem 3.35]{Hatcherbook})
$$
H^*_c(M,\Z) \simeq H_{n-*}(M,\Z),
$$
one can transfer the natural coproduct $\Delta$ from $H_*(M,\Z)$ to  $H_c^*(M,\Z)$.   Then the triple $(H^*_c(M,\Z) , \cup, \Delta)$ is a symmetric open Frobenius algebra whose differential is identically zero. The Frobenius compatibility condition is satisfied because the Poincar\'e duality isomorphism above is a map of $H_c^*(M,\Z)$-modules.  Note that this open Frobenius structure is only natural with respect to proper maps or inclusion (of open sets).
 
If $M$ is closed then $H^*(M,\Z)=H^*_c(M,\Z)$ is indeed a closed Frobenius algebra since $H^*(M,\Z)$ has a counit given by $\int :H^*(M)\to \Z$, the evaluation on the fundamental class of $M$, while Poincar\'e duality is given by capping with the fundamental class.  The non-degenerate inner product is defined by $\langle x,y\rangle:= \int_{[M]} x\cup y$. Over  the rationals it is possible to lift this Frobenius algebra structure to the level of cochains. By a result of Lambrechts and Stanley~\cite{LamStan}, over rationals there is a connected finite dimensional commutative DG algebra $A$ which is quasi-isomorphic to the singular cochain algebra $C^*(M,\Q)$ on a given $n$-dimensional manifold $M$,  equipped with a bimodule isomorphism $A\rightarrow A^\vee $ inducing the Poincar\'e duality $H^*(M)\rightarrow H_{n-*}(M)$. The analogue  of this result for open manifolds  is still not known. 
It is more reasonable to expect a kind of homotopy open Frobenius model for open manifolds rather than an open Frobenius algebra model. This also suggests 
the result of this paper need to be generalized to the homotopy Frobenius algebra, a notion to be defined.
\bigskip

 \noindent \textbf{Example 2:} 
 It is known that the homology of the free loop space of a closed oriented manifolds is an open Frobenius algebra \cite{CG}. Similarly the Hochschild cohomology $HH^*(A)$ of a closed Frobenius  algebra is  an open Frobenius algebra \cite{TZ}.

\begin{proposition}\label{pro-frob}
\begin{numlist} 
\item A closed Frobenius algebra is a symmetric open Frobenius algebra.
\item A symmetric open Frobenius  algebra $A$ with a counit is finite dimensional and in fact is a closed Frobenius algebra.
\item A symmetric and commutative open Frobenius algebra is cocommutative.
\item For all $z$ in a symmetric open Frobenius  algebra $A$,  $\sum_{(z)}(-1)^{|z''||z'|} z''z'$ belongs to the center of  $A$.
\end{numlist}
\end{proposition}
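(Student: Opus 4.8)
Parts (1) and (2) are the two directions of a single correspondence, and I would treat them in tandem. For (1), most of the work is already done in the discussion preceding the proposition: starting from a closed Frobenius algebra, the diagram~(\ref{diagfrob}) produces the coproduct $\delta$, and the subsequent computations check that $\delta$ is coassociative, a chain map of degree $m$, and an $A$-bimodule morphism, i.e.\ conditions (1)--(3) of Definition~\ref{defopfrob}. The one remaining point is symmetry of $\delta(1)$. I would deduce it from the symmetry of the inner product: writing $\langle a,b\rangle$ and $\langle b,a\rangle$ both via the $x=1$ characterization~(\ref{eqclosedco3}) (equivalently $\langle a,b\rangle=\langle 1,ab\rangle=\sum_{(1)}(-1)^{m|1'|}\langle 1'',a\rangle\langle 1',b\rangle$), invoking $\langle b,a\rangle=(-1)^{|a||b|}\langle a,b\rangle$, and cancelling through the non-degeneracy of $\langle-,-\rangle$, one reads off exactly $\sum_{(1)}1'\otimes1''=\sum_{(1)}(-1)^{|1'||1''|+m}1''\otimes1'$, the extra $(-1)^m$ coming from the degree constraint $|a|+|b|=m$ on the surviving terms.

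For (2), let $\eta\colon A\to\kk$ be the counit, which I take to be a degree $-m$ chain map satisfying~(\ref{eq-cou}). The crux is finite-dimensionality. As an element of $A\otimes A$, $\delta(1)$ is a finite sum $\delta(1)=\sum_{i=1}^N e_i\otimes f_i$; condition (3) of Definition~\ref{defopfrob} gives $\delta(z)=\delta(1\cdot z)=\sum_i e_i\otimes f_i z$, whence~(\ref{eq-cou}) yields $z=\sum_i(-1)^{m|e_i|}\eta(f_i z)\,e_i$, so $A$ is spanned by $e_1,\dots,e_N$ and is finite dimensional (finitely generated, over a general ground ring). The same argument on the other side, via $\delta(z)=\delta(z\cdot1)=\sum_i(-1)^{m|z|}z e_i\otimes f_i$, gives $z=\sum_i(-1)^{m|e_i|}\eta(z e_i)\,f_i$. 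I would then put $\langle x,y\rangle:=\eta(xy)$: invariance $\langle xy,z\rangle=\langle x,yz\rangle$ is associativity, $\langle dx,y\rangle=-(-1)^{|x|}\langle x,dy\rangle$ is $\eta\circ d=0$, and non-degeneracy is the two expansions just obtained ($\langle z,-\rangle=0$ forces all $\eta(z e_i)=0$, hence $z=0$). Graded symmetry $\eta(xy)=(-1)^{|x||y|}\eta(yx)$ I would extract by expanding $\eta(xy)=\sum_i(-1)^{m|e_i|}\eta(f_i x)\eta(e_i y)$ with the first relation, then pushing $\delta(1)=\sum e_i\otimes f_i$ through the symmetry identity and using that the surviving terms force $|e_i|=|y|$ and $|f_i|=|x|$, so that the signs collapse (with $m^2\equiv m$) to $(-1)^{|x||y|}$. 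Finally $\alpha_x(y)=(-1)^{|x|+m}\langle x,y\rangle$ is an $A$-bimodule map (from invariance) and a chain map (from $\eta d=0$) of degree $m$ that is injective (from non-degeneracy); since $A$ and $A^\vee$ have the same finite total dimension, $\alpha$ is an isomorphism, so $A$ is a closed Frobenius algebra.

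For (3) and (4) I would use the same mechanism: reduce the coproduct of a general $z$ to that of $1$ via the bimodule property, $\delta(z)=\sum_{(1)}1'\otimes1''z=\sum_{(1)}(-1)^{m|z|}z1'\otimes1''$, then invoke the symmetry of $\delta(1)$, then use (co)commutativity or associativity, keeping $|1'|+|1''|=m$ in mind. For (3): applying the symmetry identity to $\delta(z)=\sum_{(1)}1'\otimes1''z$ gives $\delta(z)=(-1)^m\sum_{(1)}(-1)^{|1'||1''|}1''\otimes1'z$; applying the signed flip $\sigma$ and using commutativity of $A$ to move $z$ to the left turns this into $\sigma\delta(z)=(-1)^{m+m|z|}\sum_{(1)} z1'\otimes1''=(-1)^m\delta(z)$, the last equality being $\delta(z)=\delta(z\cdot1)=\sum_{(1)}(-1)^{m|z|}z1'\otimes1''$, which is cocommutativity. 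For (4), write $\phi(z):=\sum_{(z)}(-1)^{|z''||z'|}z''z'=\mu\sigma\delta(z)$ with $\mu$ the product; the reduction plus the symmetry identity give $\phi(z)=(-1)^m\sum_{(1)}(-1)^{|1''||z|}1'z1''$. From this I would compute $\phi(z)\,a$ directly and $a\,\phi(z)$ after moving $a$ from the left to the right via $\sum_{(1)}a1'\otimes1''=(-1)^{m|a|}\sum_{(1)}1'\otimes1''a$ (the bimodule property applied to $\delta(a)$); comparing, and using $|\phi(z)|=|z|+m$, gives $a\,\phi(z)=(-1)^{|a||\phi(z)|}\phi(z)\,a$, i.e.\ $\phi(z)$ is central.

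The conceptual content of all four parts is modest --- reduce everything to $\delta(1)$, use the symmetry hypothesis, use (co)commutativity or associativity --- so most of the labour is Koszul sign bookkeeping, made a little fiddly by the degree shift the coproduct carries ($|z'|+|z''|=|z|+m$ but $|1'|+|1''|=m$). The step I expect to be genuinely substantive is in (2): deducing finite-dimensionality from the bare existence of a counit, and then checking that the tautological pairing $\eta(xy)$ is graded-symmetric --- this is where the symmetry hypothesis on $\delta(1)$ is indispensable and does real work rather than merely fixing a sign.
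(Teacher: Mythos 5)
Your proposal is correct and follows essentially the same route as the paper's proof: deriving symmetry of $\delta(1)$ from (\ref{eqclosedco3}) for (1), defining $\langle x,y\rangle=\eta(xy)$ and using the expansion $x=\sum(-1)^{m|1'|}\eta(1''x)1'$ to get finite-dimensionality and symmetry for (2), and reducing $\delta(z)$ to $\delta(1)$ via the bimodule property plus the symmetry of $\delta(1)$ for (3) and (4). The only quibble is a harmless bookkeeping slip in (2) (the surviving terms force $|e_i|=m-|y|$ and $|f_i|=m-|x|$, not $|e_i|=|y|$, $|f_i|=|x|$), which does not affect the sign $(-1)^{|x||y|}$ you extract.
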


\begin{proof}
1) It follows from the characterization (\ref{eqclosedco3}). Indeed,

 \begin{equation*}\begin{split}
&(-1)^{m|b|+m}   \sum_{(1)} (-1)^{|1'||1''|+m}  \langle 1',a \rangle \langle 1'',b \rangle=(-1)^{m|b|}    \sum_{(1)} (-1)^{(m-|a|)(m-|b|)}  \langle 1',a \rangle \langle 1'',b \rangle\\
&=  (-1)^{|a||b|+m|a|+m}    \sum_{(1)}  \langle 1',a \rangle \langle 1'',b \rangle 
\\&=(-1)^{|a||b|} \langle b,a\rangle= \langle a,b\rangle
\end{split}
 \end{equation*}
therefore $\sum_{(1)}  (-1)^{|1'||1''|+m} 1''\otimes 1'=\sum_{(1)}   1'\otimes 1''$.

2)  The inner product is defined by $\langle x,y\rangle=\eta(xy)$. It is clearly invariant. The identity $x= \sum (-1)^{m|x'|}\eta (x'')x'=\sum (-1)^{m|1'|}\eta (x1'')1'$ proves that $A=Span_{\kk} \{1's\}$ hence the finite dimensionality of $A$.

Now we must prove that $\langle-,-\rangle$ is symmetric.  By the identity (\ref{eq-cou}) we can write $$xy=\sum (-1)^{m|x'|}\eta(x'') x' y=\sum (-1)^{m|1'|}\eta(1''x) 1' y,$$ therefore for all $x$ and $y$
\begin{equation*}
\langle x,y\rangle=\sum (-1)^{m|1'|}\eta(1''x) \eta(1' y).
\end{equation*}
Since $A$ is symmetric, we have
\begin{equation*}\begin{split}
\langle x,y\rangle&=\sum (-1)^{m|1'|}\eta(1''x) \eta(1' y)=\sum (-1)^{m|1''|+|1'||1''|+m}\eta(1'x) \eta(1'' y)\\
&=\sum (-1)^{m (m-|1'|)+ (m-|x|)(m-|y|))+m}\eta(1'x) \eta(1'' y)\\
&=\sum (-1)^{m|1'|+|x||y|}\eta(1'x) \eta(1'' y)=(-1)^{|x||y|}\langle y,x\rangle
\end{split}
\end{equation*}
3) 
\begin{equation}\begin{split}
&\sum x'\otimes x''=\sum (-1)^{m|x|}x 1'\otimes 1 ''=\sum (-1)^{m|x|+|1''||1'|+m} x1''\otimes 1'\\&=  \sum  (-1)^{m|x|+|1''||1'|-|x||1''|+m}  1''x\otimes 1'=\sum (-1)^{m|x|+(|x''|-|x|)|x'|-|x|(|x''|-|x|)+m} x''\otimes x'\\
&=\sum (-1)^{|x'||x''|+|x|(m+|x|-|x'|-|x''|)+m} x''\otimes x'=\sum (-1)^{|x'||x''|+m} x''\otimes x'.
 \end{split}
\end{equation}
\newline
4) 
 \begin{equation*} \begin{split}
x\sum_{(z)} (-1)^{|z''||z'|} z''z'&=\sum_{(z)}(-1)^{|z''||z'|}  xz''z'= \sum_{(1)}(-1)^{(|1''|+|z|)|1'|}x 1''z1'\\
 &=\sum_{(1)}(-1)^{(|1'|+|z|)|1''| +|1'||1''|+m}x 1'z1''=\sum_{(1)}(-1)^{|z||1''|+m}x 1'z1''\\
 &=\sum_{(x)}(-1)^{m|x|+|z| |x''|+m} x'zx'' =\sum_{(x)}(-1)^{m|x|+|z| (|1''|-|x|)+m}   1'z1'' x
\\ &=\sum_{(1)} (-1)^{m|x|+|z| (|1'|-|x|)+|1'||1''|}  1''z1' x\\ 
& = \sum_{(1)} (-1)^{m|x|+|z| (|z'|-|x|)+|z'|(|z''|-|z|)}  z''z' x\\
&=(-1)^{(m+|z|)|x|}(\sum_{(z)}(-1)^{|z''||z'|} z''z')x.
 \end{split}
\end{equation*}
We recall that  $\sum_{(z)}(-1)^{|z''||z'|} z''z'$ is of degree $m+|z|$.
\end{proof}

Before explaining how an open Frobenius algebras gives rise to a (co)BV-algebra, we recall the definition of the BV-algebras and the definition of Hochschild homology and cohomology.

\begin{definition}\label{BV-def}(Batalin-Vilkovisky algebra) A BV-algebra is a Gerstenhaber algebra  $(V^*,\cdot,[-,-])$ with a degree  one operator
$\Delta: V^*\rightarrow V^{*+1}$ whose deviation from being a derivation for the product $\cdot$ is the Gerstenhaber bracket $[-,-]$, \textit{i.e.}
$$ [a,b]:=(-1)^{|a|}\Delta(ab)-(-1)^{|a|}\Delta(a)b-a \Delta(b), $$
and $\Delta^2=0$.
\end{definition}

It follows from $\Delta^2=0$ that $\Delta$ is a derivation for the bracket.
In fact the Leibniz identity for $[-,-]$ is equivalent to the 7-term relation \cite{Getz}

\begin{equation}\label{7term}
\begin{split}
\Delta(abc)&= \Delta (ab)c +(-1)^{|a|} a\Delta(bc)+ (-1)^{(|a|-1)|b|}b\Delta (ac)\\
  &-\Delta (a) bc - (-1)^{|a|}a \Delta(b) c-(-1)^{|a|+|b|}ab\Delta c.
\end{split}
\end{equation}

Definition \ref{BV-def} is equivalent to the following one:

\begin{definition}
A BV-algebra is a graded commutative associative algebra  $(A^*,\cdot)$ equipped with a degree one operator $\Delta: A^*\rightarrow A^{*+1}$ which satisfies the 7-term relation (\ref{7term}) and $\Delta^2=0$. It follows from the 7-term relation that $[a,b]:=(-1)^{|a|}\Delta(ab)-(-1)^{|a|}\Delta(a)b-a \Delta(b)$ is a Gerstenhaber bracket for the graded commutative associative algebra $(A^*,\cdot)$.
\end{definition}

 There are very interesting examples using the differential forms of  Riemannian  or symplectic manifolds, which are essentially due to Kozsul \cite{KoszulBV}. The inspiring example  for us is the homology of the free loop space $LM:=C^0(S^1,M)$ of an oriented manifold \cite{CS1} for which an algebraic model can be obtained using Hochschild cohomology of cochains algebras of  $M$ \cite{Jones}. Let us recall the definition of the Hochschild complex.

 The \emph{(normalized) Hochschild chain complex} with coefficients in $M$ is defined to be 

\begin{equation}
C_*(A,M):= M \otimes  T(s \bar{A})
\end{equation}
and comes equipped with a degree +1 differential $ D_{Hoch}=d_{0}+d_{1}$.  We recall that $TV=\oplus_{n\geq 0} V^{\otimes n}$ denotes the tensor algebra of a $\kk$-module $V$.

The internal differential is given by
\begin{equation}
\begin{split}
d_{0}(m  [a_{1},\cdots , a_{n}])&= d_M m  [a_{1},\cdots, a_{n}]-\sum_{i=1}^{n} (-1)^{\epsilon_{i}}m  [a_{1},\cdots,d_A a_{i},\dots a_{n}]\\
\end{split}
\end{equation}
and the external differential is
\begin{equation}
\begin{split}
d_{1}(m [a_{1}, \cdots , a_{n}] )= & (-1)^{|m|} m a_{1} [a_{2}, \cdots , a_{n}]\\ 
&\qquad + \sum_{i=2}^{n} (-1)^{\epsilon_{i}} m [a_{1},
\cdots, a_{i-1}a_{i}, \cdots ,a_{n}]\\ 
&\qquad \qquad -(-1)^{\epsilon_{n}(|a
_n|+1)} a_{n}m [a_{1}, \cdots ,  a_{n-1}],
\end{split}
\end{equation}
with $\epsilon_0=|m|$ and $\epsilon_{i}=|m|+|a_{1}|  +\cdots |a_{i-1}|-i+1 $ for $i\geq 1$. Note that the degree of  $m [a_{1}, \cdots , a_{n}]$ is $|m|+\sum_{i=1}^{n}|a_i|-n$.

When $M=A$, by definition $(C_*(A,A),D_{Hoch}=d_0+d_1)$ is the \emph{Hochschild chain complex of} $A$ and  the Hochschild homology of $A$ 
is by definition $HH_*(A,A):=\ker D/\im D$ is the Hochschild cohomology of $A$. 

Similarly we define the $M$-valued \emph{Hochschild cochain} of $A$ to be $$ C^*(A,M):=\Hom_{\kk}(T(s\bar{A}),M). $$ For a homogenous cochain complex$f\in C^n(A,M)$, the degree $|f|$ is defined to be the degree of the linear
map $f: (s\bar{A})^{\otimes n}\rightarrow M$. In the case of Hochschild cochains, the external differential of $f\in \Hom (s\bar{A}^{\otimes n},
M)$ is
\begin{equation}
\begin{split}
d_{1}(f)(a_{1}, \cdots , a_{n})&=-(-1)^{(|a_1|+1)|f|}a_{1}f(a_{2}, \cdots , a_{n})+\\ &-\sum_{i=2}^{n}  (-1)^{\epsilon_i} f(a_{1}, \cdots ,
a_{i-1}a_{i}, \cdots, a_{n})+(-1)^{\epsilon_n} f( a_{1}, \cdots , a_{n-1})a_n,
\end{split}
\end{equation}
where $\epsilon_i=|f|+|a_1|+\cdots +|a_{i-1}|-i+1$. The internal differential of $f\in C^*(A,M)$ is
\begin{equation}
\begin{split}
d_{0}f(a_{1}, \cdots , a_{n})&=d_Mf(a_{1}, \cdots , a_{n})+\sum_{i=1}^n (-1)^{\epsilon_i} f(a_{1}, \cdots, d_A a_i ,\cdots , a_{n}).
\end{split}
\end{equation}
The Hochschild cohomology of $A$  with coefficient in $M$ is  by definition $HH^*(A,M):=\ker D^{Hoch}/\im D^{Hoch}$.
\begin{remark}
Naturally  one can consider $\kk$-dual $\Hom_{\kk}(C_*(A,A),\kk), D_{Hoch}^\vee)$ and  its cohomology $\ker  D_{Hoch}^\vee/  \im D_{Hoch}^\vee $.
 The result is isomorphic to $(C^*(A,A^\vee),D^{Hoch}) $.  In fact the isomorphism $\widetilde{}:  (C^*(A,A^\vee),D^{Hoch}) \to  \Hom_{\kk}(C_*(A,A),\kk), D_{Hoch}^\vee) $ is given by $f\mapsto \tilde{f}$,
\begin{equation}\label{identif-eq}
\widetilde{f}(a_0,a_1,\cdots, a_n) =(-1)^{(|a_0| +1)|f|}f(a_1,a_2,\cdots, a_n)(a_0)
\end{equation}
Therefore  $HH^*(A,A^\vee)\simeq H^*(Hom_{\kk}(C_*(A),\kk), D_{Hoch}^\vee))$.
All over this article $C^*(A,A^\vee)$ is identified with  $\Hom (C_*(A,A),\kk)$ using the isomorphism above.
\end{remark}

\subsubsection*{Gerstenhaber bracket and cup product:}
When $M=A$, for $x\in C^m(A,A)$ and $y\in C^n(A,A)$ one defines the \emph{cup product} $x\cup y\in C^{m+n}(A,A)$ and the \emph{Gerstenhaber
bracket}  $[x,y] \in C^{m+n-1}(A,A)$ by
\begin{equation}
(x\cup y)(a_{1},\cdots , a_{m+n}):=(-1)^{|y|(\sum_{i\leq m} |a_{i}|+1)}x(a_{1},\cdots , a_{m})y(a_{n+1}, \cdots , a_{m+n}),
\end{equation}
 and
 \begin{equation}\label{def-circ}
 [x,y]:= x\circ y - (-1)^{(|x|+1)(|y|+1)}y\circ x,
 \end{equation}
 where
 $$
 (x\circ_j y)( a_{1},\cdots ,a_{m+n-1})=(-1)^{(|y|+1)\sum_{i\leq j} (|a_{i}|+1)} x(a_{1}, \cdots , a_{j},  y(a_{j+1}, \cdots , a_{j+m}),\cdots  ).
 $$
 and 
 \begin{equation}
 x\circ y=\sum_j x\circ_j y
 \end{equation}
 It turns out that the operations $\cup$ and $[-,-]$ are chain maps, hence they define two well-defined operations on $HH^*(A,A)$. Moreover,  $\cup$ is commutative up to homotopy
which is given by $-\circ-$. 
\begin{theorem}\label{Gersten}(Gerstenhaber \cite{Gers}) Let $A$ be a differential graded associative algebra. $(HH^*(A,A),\cup,[-,-])$  is a \textit{Gerstenhaber algebra} that is for all $x,y$ and $z\in HH^*(A,A)$ we have:
\begin{enumerate}
    \item $\cup$ is an associative and graded commutative product,
    \item  $[x,y\cup z] = [x,y]\cup z + (-1)^{(|x|-1)|y|}y\cup[x,z] $ (Leibniz rule),
    \item $[x,y] = -(-1)^{(|x|-1)(|y|-1)} [y,x] $,
    \item $[[x,y],z] = [x,[y,z]] -(-1)^{(|x|-1)(|y|-1)}[y,[x,z]] $ (Jacobi identity).
\end{enumerate}
\end{theorem}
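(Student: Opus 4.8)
The statement is classical (Gerstenhaber \cite{Gers}); the plan is to reprove it in the present DG setting along the standard route, through the right-symmetric (pre-Lie) structure carried by $\circ$ at the cochain level. Two of the four assertions are almost formal. Associativity of $\cup$ is immediate from its defining formula, and graded antisymmetry (3) is a one-line manipulation of the definition of $[-,-]$, using $(-1)^{(|x|+1)(|y|+1)}=(-1)^{(|x|-1)(|y|-1)}$. Graded commutativity of $\cup$ on $HH^*(A,A)$ follows from the cochain identity already advertised above, namely that $-\circ-$ is a chain homotopy between $\cup$ and its opposite:
$$ x\cup y-(-1)^{|x||y|}\,y\cup x=\pm\big(D(x\circ y)-(Dx)\circ y-(-1)^{|x|}x\circ(Dy)\big), $$
which one checks by expanding both sides (compatibility with the internal part $d_0$ of the Hochschild differential $D$ is a Leibniz computation and contributes nothing extra). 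This disposes of (1).

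The combinatorial heart is the graded right-symmetry identity for $\circ$, valid for all cochains before passing to cohomology:
$$ (x\circ y)\circ z-x\circ(y\circ z)=(-1)^{(|y|+1)(|z|+1)}\big((x\circ z)\circ y-x\circ(z\circ y)\big). $$
To prove it I would expand every term as a sum of iterated insertions $x\circ_i(\cdots)$ and organise the resulting summands according to whether the slot into which $y$ is inserted and the slot into which $z$ is inserted are disjoint or nested. The disjoint contributions occur on the left-hand side and, after the reindexing forced by the two insertions, become symmetric under exchanging $(y,z)$ up to the sign $(-1)^{(|y|+1)(|z|+1)}$; hence they cancel once one forms the associators on both sides. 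The nested contributions are exactly $x\circ(y\circ z)$ on the left and $x\circ(z\circ y)$ on the right. Keeping the Koszul signs coming from the exponents $\epsilon_i=|y|+\cdots$ in the definition of $\circ_j$ straight throughout this bookkeeping is, I expect, the main obstacle.

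Granting right-symmetry, assertion (4) is formal: regrade $C^*(A,A)$ by $\|x\|:=|x|+1$, so that $\circ$ has degree $0$ and $(C^*(A,A),\circ)$ becomes a graded pre-Lie algebra; then the antisymmetrisation $[x,y]=x\circ y-(-1)^{\|x\|\,\|y\|}y\circ x$ automatically satisfies the graded Jacobi identity — one substitutes the right-symmetry identity into the three-term Jacobi sum and the non-associator terms cancel in pairs. Since $[-,-]$ (unlike $\circ$) is a chain map, as recalled above, it descends to $HH^*(A,A)$ and (3)--(4) hold there as well.

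Finally, the Leibniz rule (2) is once more a cochain-level computation: one writes out
$$ x\circ(y\cup z)-(x\circ y)\cup z-(-1)^{(|x|+1)|y|}\,y\cup(x\circ z) $$
and identifies it, after separating the insertions of $x$ that land entirely in the $y$-block, entirely in the $z$-block, or straddle the two, as the sum of an explicit $D$-coboundary and a term that is a boundary by the homotopy commutativity of $\cup$ from the first step. Passing to $HH^*(A,A)$ and rewriting $(-1)^{(|x|+1)|y|}$ as $(-1)^{(|x|-1)|y|}$ yields $[x,y\cup z]=[x,y]\cup z+(-1)^{(|x|-1)|y|}y\cup[x,z]$. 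The only genuinely delicate points are the two sign-tracking steps: the right-symmetry enumeration of the second paragraph and this last identity.
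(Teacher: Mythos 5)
The paper does not prove this statement: it is quoted verbatim from Gerstenhaber's paper \cite{Gers} and used as a black box, so there is no internal proof to compare against. Your outline is precisely Gerstenhaber's original argument --- homotopy commutativity of $\cup$ witnessed by $\circ$, the graded right-symmetric (pre-Lie) identity for the associator of $\circ$ giving Jacobi after antisymmetrisation, and the Leibniz defect being a coboundary --- and it is correct as a plan. Two small points to watch when you fill in the details. First, the Leibniz rule splits into two halves of different natures: $(y\cup z)\circ x=(y\circ x)\cup z\pm y\cup(z\circ x)$ holds \emph{exactly} at the cochain level (this is where your ``lands in the $y$-block / $z$-block'' decomposition lives --- there are no straddling terms, since $x$ is inserted into a single tensor slot), whereas $x\circ(y\cup z)$ agrees with $(x\circ y)\cup z\pm y\cup(x\circ z)$ only up to an explicit coboundary; your write-up conflates the two directions slightly. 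Second, the claim that $[-,-]$ is a chain map is not purely formal from antisymmetrisation: it follows from the same homotopy-commutativity identity $D(x\circ y)-Dx\circ y-(-1)^{|x|-1}x\circ Dy=\pm\bigl(x\cup y\mp y\cup x\bigr)$ applied to both $x\circ y$ and $y\circ x$, with the cup-product terms cancelling against each other only after the signs are tracked; this deserves a line of its own rather than an appeal to ``as recalled above.''
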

The Hochschild homology and cohomology of an algebra have an extra feature, which is the existence of the \emph{Connes operators} $B$, respectively $B^\vee$ (\cite{connes}). On the chains we have
\begin{equation}\label{B1}
    B(a_0  [a_1, a_2 \cdots, a_n ])=\sum_{i=1}^{n+1}(-1)^{\epsilon_i} 1[a_{i+1} \cdots a_n,a_0,\cdots,a_i]
\end{equation}
and on the dual theory $C^*(A)=\Hom_\kk (T(s\bar{A}),A^\vee)=\Hom (A \otimes  T(s\bar{A}), \kk)$ we have 
$$ 
(B^\vee \phi)(a_0 [a_1, a_2 \cdots, a_n ])=(-1)^{|\phi|}\sum_{i=1}^{n+1}(-1)^{\epsilon_i} \phi (1[a_{i} \cdots a_n,a_0,\cdots,a_{i-1}] ), 
$$ 
where $\phi \in C^{n+1}(A)=\Hom(A\otimes (s\bar{A})^{\otimes n+1}  , \kk)$ and $\epsilon_i=(|a_0|+\dots |a_{i-1}|-i)(|a_i|+\dots |a_{n}|-n+i-1)$. In other words $$B^\vee (\phi)= (-1)^{|\phi|} \phi \circ B.$$
Note that $\deg(B)=-1 \text{ and } \deg B^\vee=+1.$ The following theorem shows how a closed Frobenius algebra gives rise to a BV-algebra.
 \begin{theorem} (Tradler \cite{Tradler})\label{Tradthm} The Hochschild cohomology $HH^*(A,A)$ of a Frobenius algebra $A$ has natural a BV-structure whose underlying  Gerstenhaber structure is the standard 
 one \cite{Gers}. The BV-operator corresponds to the Connes operator $B^\vee$ using the natural isomorphism  $ HH^*(A,A) \simeq HH^*(A,A^\vee)[m] $. 
  \end{theorem}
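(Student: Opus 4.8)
The plan is to produce the BV operator on $HH^*(A,A)$ by transporting the dual Connes operator $B^\vee$ along Frobenius duality, and then to verify the BV identity on chains. Since $A$ is a (closed) Frobenius algebra, the map $\alpha\colon A\to A^\vee$ is a degree $m$ isomorphism of differential graded $A$-bimodules, so post-composition with $\alpha$ yields a bijection
\[
\Phi\colon C^*(A,A)\longrightarrow C^*(A,A^\vee),\qquad \Phi(f)(a_1,\dots,a_n):=\alpha\circ f(a_1,\dots,a_n),
\]
which raises degree by $m$ and, with an appropriate sign built into its definition, commutes with the Hochschild differentials: the Hochschild differential on a coefficient bimodule is functorial in the bimodule and $\alpha$ is a morphism of DG $A$-bimodules (biequivariance together with (\ref{eqclosed})). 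Combined with the identification $C^*(A,A^\vee)\cong\Hom_\kk(C_*(A,A),\kk)$ recalled above via (\ref{identif-eq}), this exhibits $C^*(A,A)$, up to a shift by $m$, as $\Hom_\kk(C_*(A,A),\kk)$. I would then simply \emph{define} $\Delta:=\Phi^{-1}\circ B^\vee\circ\Phi$ on $C^*(A,A)$. It has degree $+1$; it anticommutes with $D^{Hoch}$ because $B^\vee$ does and $\Phi$ is a chain map, hence it descends to $HH^*(A,A)$; and $\Delta^2=\Phi^{-1}(B^\vee)^2\Phi=0$ because $B^2=0$ on chains. By construction the induced operator on $HH^*(A,A)$ is intertwined with $B^\vee$ on $HH^*(A,A^\vee)[m]$, which is the last assertion of the theorem.

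The cup product $\cup$ and the Gerstenhaber bracket $[-,-]$ on $C^*(A,A)$ are built from the associative product of $A$ alone (Theorem~\ref{Gersten}) and do not involve the Frobenius structure, so ``the underlying Gerstenhaber structure is the standard one'' needs no argument. It therefore suffices to establish, in $HH^*(A,A)$, the single identity
\[
[f,g]\;=\;(-1)^{|f|}\Delta(f\cup g)-(-1)^{|f|}\Delta(f)\cup g-f\cup\Delta(g);
\]
indeed, this together with $\Delta^2=0$ is equivalent, by the standard argument behind the $7$-term relation (\ref{7term}), to $(HH^*(A,A),\cup,[-,-],\Delta)$ being a BV-algebra with $[-,-]$ the Gerstenhaber bracket of Theorem~\ref{Gersten}.

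It remains to prove that identity, which is where the only real difficulty lies. First I would make $\Delta$ explicit: from $B^\vee\psi=(-1)^{|\psi|}\psi\circ B$ and the definition of $\Phi$ one sees that $\Delta f$ is a signed sum over the cyclic rotations of the word $[a_1,\dots,a_n]$ carrying an insertion of the Frobenius copairing $\delta(1)=\sum_{(1)}1'\otimes 1''$, that insertion being what converts the $\kk$-valued $B^\vee\Phi(f)$ back into an $A$-valued cochain --- this is precisely where the counit identities $x=\sum_{(1)}(-1)^{|1'|}\eta(1'x)1''=\cdots$ of (\ref{eq-cour1}), with $\eta(x)=\langle x,1\rangle$, and the symmetry of the copairing (Proposition~\ref{pro-frob}(1)) enter. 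Feeding a concatenated word $[a_1,\dots,a_p\,|\,a_{p+1},\dots,a_{p+q}]$ into $\Delta(f\cup g)$, the cyclic rotations split into those whose cut lies strictly inside the $f$-block, those whose cut lies strictly inside the $g$-block, and those cutting through a junction of the two blocks. After sliding the $\delta(1)$-insertion past a whole block --- legitimate because $\delta$ is a bimodule map, condition $(3)$ of Definition~\ref{defopfrob} --- the first family reassembles into $(-1)^{|f|}\Delta(f)\cup g$, the second into $f\cup\Delta(g)$, and the junction family into $f\circ g-(-1)^{(|f|+1)(|g|+1)}g\circ f=[f,g]$. The finitely many leftover ``defect'' terms (a rotation landing the insertion exactly on a block boundary, or distributing $\delta(1)$ across both blocks) are organized, using coassociativity of $\delta$ and the Frobenius relations of Proposition~\ref{pro-frob} (notably the centrality statement $(4)$), into $D^{Hoch}$ of an explicit cochain of degree $|f|+|g|-2$, and hence vanish in cohomology.

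The genuinely laborious point is the last one: reconciling the cyclic-rotation signs $\epsilon_i$ with the cup-product sign and the $(-1)^{m|\cdot|}$'s carried by $\delta$, and writing down the explicit homotopy that annihilates the defect terms. Conceptually nothing subtle happens --- the content is just that Connes' operator $B$, read through Frobenius duality, is a differential operator of order $\le 2$ for $\cup$ whose polarization is the Gerstenhaber $\circ$-product --- but the sign bookkeeping is long. One can avoid most of it by appealing instead to the noncommutative Cartan calculus: the actions of Hochschild cochains on chains by contraction $\iota_f$ and Lie derivative $\mathcal L_f$ obey a homotopy formula $[\iota_f,B]=\pm(\mathcal L_f-\iota_{Df})$, and transporting $\iota_f$ to $f\cup(-)$ and $B$ to $\Delta$ through the Frobenius identification of $C_*(A,A)$ with a shift of $C^*(A,A)$ turns this formula directly into the BV relation displayed above.
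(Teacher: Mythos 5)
Your proposal is essentially sound, but it follows Tradler's original cochain-level route rather than the route this paper takes, and the two are genuinely different. You work on $C^*(A,A)$ for a \emph{closed} Frobenius algebra: transport $B^\vee$ through $\Phi=\alpha\circ(-)$ and the identification (\ref{identif-eq}) to get $\Delta$, observe that $\cup$ and $[-,-]$ never see the Frobenius structure, and then verify the single deviation identity by decomposing the cyclic rotations of $\Delta(f\cup g)$ according to where the cut and the $\delta(1)$-insertion land relative to the $f$- and $g$-blocks. The paper instead proves Theorem~\ref{Tradthm} as a corollary of Theorem~\ref{themop-cop}: it constructs the coproduct $\theta$ of (\ref{eq-cup-frob}) on the chain complex $C_*(A,A)[m]$ of a symmetric \emph{open} Frobenius algebra, establishes a homotopy coBV structure there via the explicit homotopies $h$, $H$ and $G$ (adapted from Chas--Sullivan), dualizes, and only at the end specializes to the closed case by checking that under $f\mapsto\tilde f$ the product $\odot$ is $\cup$ and the commutativity homotopy $\mu(\tilde x\otimes\tilde y)h$ is $x\circ y$, which is what pins down the Gerstenhaber structure as the standard one. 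What each buys: your argument is shorter and self-contained for the closed case, but it uses the nondegenerate pairing in an essential way (both to define $\Phi$ and to convert the $\kk$-valued $B^\vee\Phi(f)$ back into an $A$-valued cochain), so it does not extend to open Frobenius algebras; the paper's detour through chains is longer but yields the coBV coalgebra on $HH_*(A,A)[m]$ and survives without a counit or inner product.

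One caveat: the heart of the matter is exactly the step you defer. The cyclic rotations of $\Delta(f\cup g)$ do \emph{not} split on the nose into copies of $\Delta(f)\cup g$, $f\cup\Delta(g)$ and $[f,g]$ --- there are $p+q$ rotations against $2(p+q)$ terms on the other side --- so the ``defect terms'' are not a finite leftover to be absorbed but the whole discrepancy, and exhibiting the cochain whose coboundary they form (Tradler's two-parameter homotopy, the analogue of the operator $H$ in the proof of Theorem~\ref{themop-cop}) is where all the work and all the sign risk lives. Your fallback via the Cartan homotopy formula $[\iota_f,B]=\pm(\mathcal{L}_f-\iota_{Df})$ is a legitimate known shortcut, but it silently requires two further verifications: that the Frobenius identification carries $\iota_f$ to $f\cup(-)$ and $\mathcal{L}_f$ to $[f,-]$, each of which is a computation of the same flavor as the one being avoided. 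So the architecture is right and the approach is known to succeed, but as written the proposal asserts rather than proves the one nontrivial identity.
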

The main idea here is that we try to identify the  homotopy (co)BV-structures directly on $C_*(A ,A)$  (and its dual) rather than $C^*(A,A)$.  
\section{coBV structure on Hochschild homology}\label{section2}
In this section we present a natural homotopy coBV-structure on the shifted Hochschild chain complex $C_*(A,A)[m]$ of a symmetric open Frobenius algebra $(A,\cdot, \delta)$ of degree $m$. The natural candidate for the coBV operator is the Connes operator $B$, so we just need a  degree $m$ coproduct on the Hochshild chains $C_*(A,A)$. This is given by formula (before the shift)
\begin{equation} \label{eq-cup-frob}
\theta (a_0[a_1,\cdots,a_n])=\sum_{(a_0), 0\leq i\leq n}  +(-1)^{|a_0'|\sigma_i}(a_0'' [a_1,\cdots,a_{i-1},a_i ]) \otimes (a_0'  [a_{i+1}, \cdots,a_{n}])
\end{equation}
where $\sigma_i=a_0''+a_1+a_2+\cdots a_i+i$.  This coproduct is of degree $m$ and is a chain map \emph{i.e.}
\begin{equation*}
\theta D_{Hoch}=(-1)^m(D_{Hoch}\otimes1+1\otimes D_{Hoch})
\end{equation*}
The proof that $\theta$ is a chain map, uses $A$ being symmetric. The most nontrivial part of the proof  is that 
\begin{equation*}
\begin{split}
&\sum_{(a_0)}(-1)^{|a'_0|(|a''_0|+|a_1|+\cdots |a_p|+p)+(|a_p|+1)(|a_0''|+|a_1|+\cdots |a_{p-1}|+p-1)} a_pa''_0[a_1,\cdots, a_{p-1}]\otimes a'_0[a_{p+1},\cdots,a_n]\\
&=\sum_{(a_0)}(-1)^{|a'_0|(|a''_0|+|a_1|+\cdots |a_{p-1}|+p-1)+|a_0''|+|a_1|+\cdots +|a_{p-1}|+p-1+|a_0'|}a''_0[a_1,\cdots, a_{p-1}]\otimes a_0'a_p[a_{p+1},\cdots,a_n].
\end{split}
\end{equation*}
appears in $(D_{Hoch}\otimes1+1\otimes D_{Hoch})$ twice but with opposite signs. The proof of the identity above is as follows:
\begin{equation*}
\begin{split}
&\sum_{(a_0)}(-1)^{|a'_0|(|a''_0|+|a_1|+\cdots |a_p|+p)+(|a_p|+1)(|a_0''|+|a_1|+\cdots |a_{p-1}|+p-1)} a_pa''_0[a_1,\cdots, a_{p-1}]\otimes a'_0\\
&= \sum_{(a_0)}(-1)^{|1'|(|1''|+|a_0|+\cdots |a_p|+p)+(|a_p|+1)(|1''|+|a_0|+\cdots |a_{p-1}|+p-1)} a_p1''a_0[a_1,\cdots, a_{p-1}]\otimes 1'\\
&= \sum_{(a_0)}(-1)^{|1''|(|a_0|+\cdots |a_p|+p)+(|a_p|+1)(|1'|+|a_0|+\cdots |a_{p-1}|+p-1)+m} a_p1'a_0[a_1,\cdots, a_{p-1}]\otimes 1''\\
&= \sum_{(a_0)}(-1)^{m|a_p|+|a_p''|(|a_0|+\cdots |a_p|+p)+(|a_p|+1)(|a_p'|+|a_0|+\cdots |a_{p}|+p-1)+m} a'_pa_0[a_1,\cdots, a_{p-1}]\otimes a_p''\\
&= \sum_{(a_0)}(-1)^{m|a_p|+(|1''|+|a_p|)(|a_0|+\cdots |a_p|+p)+(|a_p|+1)(|1'|+|a_0|+\cdots |a_{p}|+p-1)+m} 1'a_0[a_1,\cdots, a_{p-1}]\otimes 1''a_p\\
&= \sum_{(a_0)}(-1)^{|1''||1'|+m|a_p|+(|1'|+|a_p|)(|a_0|+\cdots |a_p|+p)+(|a_p|+1)(|1''|+|a_0|+\cdots |a_{p}|+p-1)} 1''a_0[a_1,\cdots, a_{p-1}]\otimes 1'a_p\\
&= \sum_{(a_0)}(-1)^{(|a_0''|+|a_0|)|a_0'|+m|a_p|+(|a_0'|+|a_p|)(|a_0|+\cdots |a_p|+p)+(|a_p|+1)(|a_0''|+|a_1|+\cdots |a_{p}|+p-1)} a''_0[a_1,\cdots, a_{p-1}]\otimes a_0'a_p\\
&=\sum_{(a_0)}(-1)^{|a'_0|(|a''_0|+|a_1|+\cdots |a_{p-1}|+p-1)+|a_0''|+|a_1|+\cdots +|a_{p-1}|+p-1+|a_0'|}a''_0[a_1,\cdots, a_{p-1}]\otimes a_0'a_p. \\
\end{split}\end{equation*}
This gives rise to a product on Hochschild cochains as follows:  For  $\tilde{f},\tilde{g} \in  \Hom (A \otimes T(s\bar{A}), \kk)$  we set
$$
\tilde{f} \circ \tilde{g}= \mu(\theta^\vee (\tilde{f}\otimes \tilde{g}))=(-1)^{m(|\tilde{f} |+| \tilde{g}|)} \mu(\tilde{f}\otimes \tilde{g})\circ \theta$$
where $\mu: \kk\otimes \kk \rightarrow \kk$  is the multiplication.   Note that this product is of degree $-m$, therefore in order to obtain a product of degree zero we should shift the grading by $-m$.  The new degree zero product on  $\Hom(C_*(A,A),\kk)[-m]$ is  (see the appendix)
\begin{equation*}
\widetilde{f}\odot \widetilde{g}= (-1)^{m|\widetilde{f}|}\widetilde{f} \circ \tilde{g}
\end{equation*}
More explicitly, for $\tilde{f}$ and  $\tilde{g} \in \Hom(C_*(A,A),\kk)[-m]$ we have
$$(\tilde{f} \odot \tilde{g})(a_0 [a_1,\cdots,a_n])= \sum_{(a_0),1\leq  i\leq n}  (-1)^{m|\tilde{g}|+a'_0\sigma_i+(m+|\tilde{g}|)\sigma_i}\tilde{f}(a''_0 [a_1,\cdots,a_{i-1}, a_i]) \tilde{g}(a'_0  [a_{i+1}, \cdots,a_{n}]).$$ 
In the case of a closed Froebnius algebra this product corresponds to the standard cup product on $HH^*(A,A)$ using the isomorphism
\begin{equation*}
HH^*(A,A)\simeq HH^*(A,A^\vee)[-m]\simeq H^*(\Hom(C_*(A,A))[-m]
\end{equation*}
induced by the inner product on $A$.  More explicitly we identify  $A$ with $A^\vee$, as bimodules,
using the map $ x\mapsto  (\alpha_x:=(-1)^{x} \langle x,-\rangle )$ which identifies $C^*(A,A)$ with $C^*(A,A^\vee)$.  The latter itself is identified with  $\Hom(C_*(A,A),\kk)$  using the isomorphism (\ref{identif-eq}). Overall we have an isomorphism of cochain complexes which sends
$f \in C^*(A,A)$, $f:(sA)^{\otimes n}\to A$ to the cochain $\tilde{f}\in   \Hom (A\otimes (sA)^{\otimes n},\kk)[m]$,
\begin{equation*}
\tilde{f}(a_0,a_1,\cdots, a_n):=(-1)^{(a_0+1)f}\langle a_0, f(a_1,\cdots, a_n)\rangle.
\end{equation*}
Using the identity 
$$x= \sum (-1)^{m|x'|}\eta (x'')x'=\sum (-1)^{m|1'|}\eta (1''x)1'$$
we can write 
\begin{equation*}
f(a_1,\cdots, a_n):=\sum_{(1)}(-1)^{|\tilde{f}|(1+|1''|)} \tilde {f}(1'',a_1,\cdots, a_n)1' .
\end{equation*}
which is an explicit formula for the inverse of the isomorphism $f\mapsto \tilde{f}$.

Let  $f:  (s\bar{A})^{\otimes p} \to A$ and $g: (s\bar{A})^{\otimes q} \to A$  in $C^*(A,A)$ be two cochains. First note that  the degrees of $\tilde{f}$ and $\tilde{g}$ as elements of $\Hom (C_*(A,A),\kk)[m]$ are respectively equal to $|f|$ and $|g|$. 
\begin{equation*}
\begin{split}
&\widetilde{f \cup g}(a_0, a_1,\cdots, a_{p+q})= (-1)^{(a_0+1)(|f|+|g|)}\langle a_0,  ( f \cup g)( a_1,\cdots, a_{p+q})\rangle= \\
&\sum_{(a_0)}(-1)^{(a_0+1)(|f|+|g|)+m(1+|a'_0|)+|g|( \sigma_p-|a''_0|)}  \langle a''_0,  f ( a_1,\cdots, a_{p})\rangle\langle a'_0,  g( a_{p+1},\cdots, a_{p+q})\rangle =\\
&\sum_{(a_0)}(-1)^{(a_0+1)(|f|+|g|)+m(1+|a'_0|)+|g|( \sigma_p-|a''_0|)+(1+|a''_0|)|f|+(1+|a'_0|)|g|}  \tilde{f} (a''_0,  a_1,\cdots, a_{p})\tilde{g}(a'_0,  a_{p+1},\cdots, a_{p+q}). \\
\end{split}
\end{equation*}
Since  $\sigma_p=|f|+m$ it follows that $\widetilde{f \cup g}=\tilde{f}\ast \tilde{g}$, therefore $(HH^*(A,A),\cup)$ and $(H^*(A,A^\vee)[m], \odot)$ are isomorphic as algebras. 
\begin{theorem} \label{themop-cop} For a symmetric open Frobenius algebra $(A,\cdot, \delta)$ of degree $m$, the shifted Hochschild chain complex $(C_*(A,A)[m], \theta, D_{Hoch})$ is a homotopy co-BV algebra. As a consequence $(HH_*(A,A)[m], \theta, B)$ and $(HH^*(A,A^\vee)[m], \odot, B^\vee)$ are respectively BV-algebra and coBV-coalgebra. In particular if  $A$ is a closed Frobenius algebra, then using the natural isomorphism 
\begin{equation}\label{iso-clo-frob}
(C^*(A,A^\vee)[m], \odot) \simeq (C^*(A,A),\cup),
\end{equation}
$HH^*(A,A)$ is endowed with a BV-algebra whose underling Gerstenhaber algebra is the standard one and the BV-operator is the image of the Connes operator $B^\vee$ under the isomorphism (\ref{iso-clo-frob}). This recovers Tradler's result \cite{Tradler}, Theorem \ref{Tradthm} for closed Frobenius algebras.
\end{theorem}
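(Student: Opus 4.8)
# Proof Proposal

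The plan is to establish the theorem in three stages, following the architecture already visible in the statement. First, I would verify that $\theta$ is a genuine coassociative coproduct of degree $m$ on $C_*(A,A)$, up to the expected homotopy. The coassociativity should follow formally from the coassociativity of $\delta$ on $A$ (condition (2) of Definition \ref{defopfrob}) together with the Frobenius bimodule compatibility (condition (3)), since $\theta$ essentially splits a cyclic word $a_0[a_1,\dots,a_n]$ into two arcs after applying $\delta$ to the distinguished entry $a_0$. The combinatorics here mirror the standard cutting of a circle into two intervals: applying $\theta$ twice cuts the circle into three arcs, and the two ways of doing this must agree, which is exactly where $\delta$ coassociativity enters. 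The chain-map property $\theta D_{Hoch} = (-1)^m(D_{Hoch}\otimes 1 + 1\otimes D_{Hoch})\theta$ is asserted in the text with the key cancellation spelled out explicitly, so I would take that identity as given and only need to track the remaining (more routine) terms.

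Second, I would introduce the homotopy for co-commutativity and the homotopy BV relation. The coBV structure requires producing an explicit degree-$(m+1)$ operator — call it a homotopy $H$ — witnessing that $\tau\circ\theta - (-1)^{?}\theta$ (where $\tau$ is the graded swap) is null-homotopic, and more importantly the full homotopy coBV relation relating $\theta$, the Connes operator $B$, and $\tau$. Here I would build $H$ by the same arc-cutting philosophy but allowing the two output arcs to \emph{overlap} in one entry, or equivalently by inserting a copy of $\delta$ on an intermediate entry $a_i$ rather than on $a_0$; this is the Hochschild-chain analogue of the homotopy that makes the Goresky–Hingston/Chas–Sullivan coproduct coassociative and cocommutative up to homotopy on the geometric side. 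The verification that $H$ does the job is a (lengthy) sign-bookkeeping computation of the type already performed in the excerpt, using the symmetry hypothesis $\sum_{(1)} 1'\otimes 1'' = \sum_{(1)} (-1)^{|1'||1''|+m} 1''\otimes 1'$ and the derived identities (\ref{eq-cou})–(\ref{eq-cour1}).

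Third, I would extract the consequences. Passing to homology kills all the homotopy terms, so $(HH_*(A,A)[m],\theta,B)$ becomes a strict coBV-coalgebra, hence dually $(HH^*(A,A^\vee)[m],\odot,B^\vee)$ is a strict BV-algebra; the dualization is immediate from the definition $\tilde f\odot\tilde g = \mu(\theta^\vee(\tilde f\otimes\tilde g))$ and $B^\vee=(-1)^{|\phi|}\phi\circ B$. For the closed Frobenius case, I would invoke the identification already set up in the excerpt — the bimodule isomorphism $\alpha:A\to A^\vee$, $x\mapsto (-1)^{|x|}\langle x,-\rangle$, inducing $C^*(A,A)\simeq C^*(A,A^\vee)$, composed with the isomorphism (\ref{identif-eq}) — together with the computation just completed there showing $\widetilde{f\cup g}=\tilde f\odot\tilde g$. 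This transports the BV-algebra structure to $HH^*(A,A)$ with the stated Gerstenhaber structure, and since $B^\vee$ corresponds to the Connes operator under exactly this isomorphism, Theorem \ref{Tradthm} (Tradler) is recovered.

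The main obstacle is unquestionably the second stage: constructing the correct homotopy operator $H$ and verifying the homotopy coBV identity with all signs. The difficulty is twofold — first, guessing the right formula for $H$ (the naive candidate of inserting $\delta$ on an internal entry will need correction terms to interact properly with both $d_0$ and $d_1$), and second, the sheer length of the sign verification, which involves the interplay of the coproduct degree $m$, the bar-complex shift signs $\epsilon_i$, the Koszul signs from the swap $\tau$, and the symmetry relation. I expect the cleanest route is to mimic, term by term, the geometric proof that the loop coproduct is homotopy coBV, using the dictionary "distinguished input $a_0$ = marked point on the loop, entries $a_i$ = chords," so that each algebraic homotopy term has a transparent geometric meaning and the cancellations can be organized by which configuration of marked points they correspond to.
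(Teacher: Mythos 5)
Your overall architecture is the same as the paper's: an explicit cocommutativity homotopy, a Chas--Sullivan-style reduction of the seven-term relation, dualization to get the BV structure on $HH^*(A,A^\vee)$, and the transport to $HH^*(A,A)$ in the closed case via the computation $\widetilde{f\cup g}=\tilde f\odot\tilde g$ already carried out before the theorem. But the proposal has a genuine gap precisely where you locate the difficulty: the homotopies are never constructed, and the one concrete candidate you offer is not the right one. You suggest building the cocommutativity homotopy "by inserting a copy of $\delta$ on an intermediate entry $a_i$ rather than on $a_0$." That operator sends $a_0[a_1,\dots,a_n]$ to terms of the form $a_0[\dots,a_i'']\otimes a_i'[\dots]$, which replaces one bar entry by one bar entry plus one module entry --- exactly the combinatorial type of $\theta$ itself, hence of degree $m$, not $m+1$; it cannot be a homotopy between two degree-$m$ maps. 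The correct homotopy is a \emph{two}-parameter family indexed by cut positions $0\le i<j\le n+1$, which applies $\delta$ to the unit and inserts $1''$ into the first arc and $1'$ at the head of the second arc, so that one extra bar slot is created: $h(a_0[a_1,\dots,a_n])=\sum\pm\, a_0[a_1,\dots,a_i,1'',a_j,\dots,a_n]\otimes 1'[a_{i+1},\dots,a_{j-1}]$. The boundary terms at $i=0$ and $j=n+1$ are what produce $\theta$ and $\tau\theta$ (using the symmetry hypothesis to swap $1'$ and $1''$ and the Frobenius relation to absorb them into $a_0$), and this mechanism is invisible in your description.

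Beyond that, two further steps of the paper's proof are absent from your plan. First, the seven-term relation is not attacked directly: one defines the cobracket $S=h+(-1)^m\tau h$, proves (via another explicit two-parameter homotopy $H$, after splitting $\theta B=(\theta B)_1+(\theta B)_2$) that $S$ is homotopic to the deviation of $B$ from being a coderivation for $\theta$, and then proves a homotopy co-Leibniz identity for $S$ (via a third homotopy $G$ landing in the triple tensor product). Your phrase "the full homotopy coBV relation relating $\theta$, $B$, and $\tau$" does not identify this reduction, and without it there is no route to the seven-term relation. Second, in the closed Frobenius case the claim that the underlying Gerstenhaber structure is the \emph{standard} one does not follow merely from $\widetilde{f\cup g}=\tilde f\odot\tilde g$; one must additionally check that the isomorphism $f\mapsto\tilde f$ carries Gerstenhaber's pre-Lie product $x\circ y$ to the commutativity homotopy $\mu(\tilde x\otimes\tilde y)h$ for $\odot$, which is a separate computation using $x=\sum(-1)^{m|1'|}\eta(1''x)1'$ to rewrite the internal insertion $y(a_{i+1},\dots,a_{i+q})$ in terms of $\delta(1)$. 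These are not routine omissions: each requires guessing a formula that the plan does not supply.
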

\begin{proof}
The homotopy for co-commutativity is given by
\begin{equation*}\label{homotpy-cocom}
\begin{split}
h(a_0  [a_1, \cdots,a_n])&:=\sum_{(1), 0\leq i<j\leq n+1} (-1)^{t_i}a_0 [ a_1 ,\cdots ,a_i, 1'',a_{j},\cdots, a_{n}] \otimes 1'  [ a_{i+1},\cdots,a_{j-1}] .
\end{split}
\end{equation*}
where 
\begin{equation*}
\begin{split}
t_i &=(|1''|+1)(|a_0|+\cdots + |a_i|+i)+|1'|(|a_0|+|a_1|+\dots+ |a_i|+i+|1''|)\\
&( |1'|+|a_{i+1}|+\cdots +|a_{j-1}|+j-i-1  )(|a_j|+\dots+ |a_n|+n-j+1)\\
&=|1''||1'| +(m+1)(|a_0|+\cdots + |a_i|+i)\\&+( |1'|+|a_{i+1}|+\cdots +|a_{j-1}|+j-i-1  )(|a_j|+\dots+ |a_n|+n-j+1)
\end{split}
\end{equation*}
and for $j=n+1$ and $i=0$ the corresponding terms are respectively 
$$
\pm a_0 [ a_1 ,\cdots ,a_i, 1''] \otimes 1' [ a_{i+1},\cdots,a_{n}].
$$
and
$$
\pm a_0 [ 1'',a_{j},\cdots, a_{n}] \otimes 1' [ a_{1},\cdots, a_{j-1}].
$$
We have 
\begin{equation}\label{eq-cocom}
(D_{Hoch}\otimes 1+ 1\otimes D_{Hoch})h-(-1)^{m+1}hD_{Hoch}=(-1)^m\tau \circ \theta-\theta
\end{equation}
where $\tau: C_*(A,A) ^{\otimes 2}\to C_*(A,A) ^{\otimes 2}$ is given by  $\tau (\alpha_1\otimes \alpha_2)=(-1)^{|\alpha_1||\alpha_2|} \alpha_2\otimes \alpha_1$. To see  this, note that in $(D_{Hoch}\otimes 1+ 1\otimes D_{Hoch})h$ the term corresponding to  the last term of the external part of the Hochchsild differential of the first factor of $\pm a_0 [ a_1 ,\cdots ,a_i, 1''] \otimes 1' [ a_{i+1},\cdots,a_{n}]$  is  $$\pm 1''a_0 [ a_1 ,\cdots ,a_i] \otimes 1' [ a_{i+1},\cdots,a_{n}]= \pm a''_0 [ a_1 ,\cdots ,a_i] \otimes a_0' [ a_{i+1},\cdots,a_{n}]$$ which is precisely $\theta$; and the term corresponding to the first term of the external Hochchsild differential of the first factor of  $\pm a_0 [ 1'',a_{j},\cdots, a_{n}] \otimes 1' [ a_{1},\cdots, a_{j-1}]$ is $\pm a_0  1''[a_{j},\cdots, a_{n}] \otimes 1' [ a_{1},\cdots, a_{j-1}]= \pm a_0  1'[a_{j},\cdots, a_{n}] \otimes 1'' [ a_{1},\cdots, a_{j-1}]=\pm a'_0 [a_{j},\cdots, a_{n}] \otimes a_0'' [ a_{1},\cdots, a_{j-1}]$  which is $(-1)^m\tau \theta$.

To prove that the 7-term (coBV) relation holds, we use the Chas-Sullivan \cite{CS1} idea (see also \cite{Tradler}) in the case of the free loop space adapted to the combinatorial (simplicial) situation. First we identify the Gerstenhaber co-bracket explicitly. Considert the operation
$$
S:= h+(-1)^m\tau\circ h
$$
on the Hochschild complex before the shift of degree of  $m$.
Once proven that $S$ is, up to homotopy, the deviation of $B$ from being a coderivation for $\theta$, the $7$-term homotopy coBV relation is equivalent to the homotopy co-Leibniz identity for $S$.
\subsubsection*{Compatibility of $B$ and $S$:}  We  prove that $S= \theta B -(-1)^m (B \otimes id + id\otimes B) \theta$ up to homotopy. To this end, we prove that $h$ is homotopic to $(\theta B)_2-(-1)^m(B\otimes id)\theta$ and similarly $\tau h \simeq (\theta B)_1-(id \otimes B)\theta $ where  $\theta B= (\theta B)_1 +(\theta B)_2$, with 
\begin{equation*}
\begin{split}
(\theta B)_1(a_0 [a_1,\cdots ,a_n])= \sum_{0\leq i\leq j\leq n}  \sum_{(1)}\pm ( 1'' [a_i,\cdots  , a_{j}])\otimes (1' [a_{j+1},\cdots   ,a_n, a_0 \cdots , a_{i-1}]).
\end{split}
\end{equation*}
and
\begin{equation*}
\begin{split}
(\theta B)_2(a_0 [a_1,\cdots , a_n])= \sum_{0<i<j\leq n} \sum_{(1)} \pm (1'' [a_{j},\cdots  ,a_n ,a_0, a_1, \cdots  , a_{i}])\otimes (1'[a_{i+1},\cdots , a_{j-1}]).
\end{split}
\end{equation*}
The homotopy between $h$ and $(\theta B)_2-(-1)^m(B \otimes id)\theta$ is given by
\begin{equation*}
\begin{split}
H(a_0  [a_1,\cdots , a_n])= \sum_{0 \leq k\leq i < j \leq n+1} \sum_{(a_i)}   (&(-1)^{\nu_{k,i,j}}1[ a_{k+1},\cdots a_{i},1'',a_{j},\cdots, a_{n},a_0,\cdots a_k] )\\ &\otimes (1' [ a_{i+1},\cdots,a_{j-1}] ),
\end{split}
\end{equation*}
where 
\begin{equation*}
\begin{split}
\nu_{k,i,j}&= (|a_{i+1}|+\cdots + |a_{j-1}|+j-i+1)(|a_{j}|+\cdots |a_{n}|+n-j+1) + \\
& (|a_{0}|+\cdots + |a_{k}|+k+1)(|a_{k+1}|+\cdots +|a_i|+|a_j|+\cdots+ |a_{n}|+n-j+i-k+1)+\\
&|1'|(|1''|+|a_{0}|+\cdots +|a_i|+|a_j|+\cdots+ |a_{n}|+n-j+i) +(|1''|+1)( |a_{k+1}|+\cdots +|a_i|+i-k).
\end{split} 
\end{equation*}
In the formulae describing $H$, the sequence $a_j,\cdots, a_{i-1}$ can be empty. In  $D_{Hoch}H+(-1)^{m-2}H(D_{Hoch}\otimes id +id\otimes D_{Hoch})$, the terms corresponding to $k=0$, $k=i$ and $j=n+1$ are respectively $h$, $-(\theta B)_2$ and 
 \begin{equation*}
\begin{split}
(-1)^m(B\otimes 1)\theta(a_0  [a_1,\cdots ,a_n])=(-1)^m\sum \pm (1 [a_{k+1},\cdots, a_i,a'_0,a_1,\cdots, a_{k}] ) \otimes  (a''_0 [a_{i+1}, \cdots , a_n]).
\end{split}
\end{equation*}
Similarly one proves that $\tau h\simeq (\theta B)_1-(-1)^m(id \otimes B)\theta $.
\subsubsection*{Co-Leibniz identity:} The idea of the proof is identical to Lemma 4.6 \cite{CS1}. We prove that up to some homotopy we have
\begin{equation}\label{coleib}
(\theta \otimes id) S=(id\otimes \tau)(S\otimes id)\theta+ (id \otimes S) \theta
\end{equation}
At the chain level, we have
$$
(\theta \otimes id)h=(id\otimes \tau)(h\otimes id)\theta + (id \otimes h)\theta,
$$
so to prove (\ref{coleib})  we should prove that up to some homotopy
\begin{equation*}
(\theta \otimes id)\tau h=(id\otimes \tau)(\tau h\otimes id)\theta + (id\otimes  \tau h)\theta.
\end{equation*}
The homotopy is given by $G: C^*(A)\rightarrow (C^*(A))^{\otimes 3} $
\begin{equation*}
\begin{split}
G(a_0 [a_1, \cdots ,a_n])=\sum_{0\leq l < i \leq j<k} \sum_{(1),(1)} & \pm( 1''_1[a_{l+1},\cdots, a_{i}])  \otimes (1''_2  [a_{j+1},\cdots, a_{k}])\\ &\otimes a_0[a_1,\cdots, a_l,1'_1 ,a_{i+1},\dots , a_{j},1'_2,a_{k+1} \cdots a_n,a_0,\cdots, a_{l-1}],
\end{split}
\end{equation*}
that is 
\begin{equation*}
\begin{split}
&GD_{Hoch}+(-1)^{m-2}(D_{Hoch}\otimes id \otimes id+id\otimes D_{Hoch} \otimes id+id\otimes id \otimes D_{Hoch})G\\&= (\theta \otimes id)\tau h-(id\otimes \tau)(\tau h\otimes id)\theta -(id\otimes  \tau h)\theta.
\end{split}
\end{equation*}
The signs in $G$ are determined using Koszul sign rule just like the previous examples

As for the last part of the theorem, we have already proved that  $\odot$ corresponds to the cup. It only remains to prove that underlying Gerstenharber bracket of the  BV-structure
of $ (C^*(A,A),\cup)$ is the standard one. To that end, it suffices to prove that $\widetilde{} :  (C^*(A,A)[m], \cup) \to (C^*(A,A^\vee)[m],\odot)$  (see (\ref{identif-eq})) sends  the homotopy of the commutativity  $\circ$ of   
$\cup$ to the homotopy of the commutativity $(-\otimes -)h$ of $\odot$: Let $x\in \Hom ((sA)^{\otimes p},A)$ and $y\in \Hom ((sA)^{\otimes q},A)$.  Then the degrees of $\tilde{x} $ and $\tilde{y}$ as element of $\Hom (C_*(A,A),\kk)[m]$ are respectively $|x|$ and $|y|$. Similarly to the definition of $\odot$, the homotopy for the commutativity of $\tilde{x}$ and $\tilde{y}$
is given   $\Psi (\tilde{x},\tilde{y})=(-1)^{(m-1)|\tilde{y}|}\mu (\tilde{x}\otimes \tilde{y})h$ where $\mu:\kk\otimes \kk\to \kk$ is the product of the ground ring:
\begin{equation*}
\begin{split}
&\Psi (\tilde{x},\tilde{y})(a_0,a_1,\cdots,a_{p+q-1})\\&=\sum_{i,(1)} \pm  \tilde{x}( a_0, a_1,\cdots, a_i,1'' , a_{i+q+1},\cdots a_{p+q-1})\tilde{y} (1', a_{i+1},\cdots a_{i+q})
\end{split}
\end{equation*}
and on the other hand (see (\ref{def-circ})):
\begin{equation*}
\begin{split}
&\widetilde{x\circ y} (a_0,a_1,\cdots a_{p+q-1})=(-1)^{(|a_0|+1)( |x|+|y|-1)}  \langle a_0,(x\circ y)(a_1,\cdots a_{p+q-1})\rangle \\&= \sum _i \pm \langle a_0,x(a_1,\cdots, a_i, y(a_{i+1},\cdots y_{i+q}), a_{i+q+1},\cdots a_{p+q-1})\rangle \\
&=\sum _i \pm \langle a_0,x(a_1,\cdots, a_i, \sum_{(1)} 1''\langle 1', y(a_{i+1},\cdots a_{i+q})\rangle , a_{i+q+1},\cdots a_{p+q-1})\rangle\\
& =\sum _{i,(1)} \pm \langle a_0,x(a_1,\cdots, a_i,1'' , a_{i+q+1},\cdots a_{p+q-1})\rangle \langle 1', y(a_{i+1},\cdots a_{i+q})\rangle\\
& =\sum _{i,(1)} (-1)^{|(|y|+1)((|a_0|+|1'|+|a_1|+\cdots +|a_i|+i)+|1'|} \tilde{x}( a_0,\cdots, a_i,1'' , a_{i+q+1},\cdots a_{p+q-1})\tilde{y} (1', a_{i+1},\cdots a_{i+q})\\
\end{split}
\end{equation*}
A comparison of the signs by using the identity $ |y|+|1'|+|a_{i+1}|\cdots +|a_{i+q|}|=m$, finishes the proof.
\end{proof}

\begin{remark}
By  F\'elix-Thomas \cite{FTBV} theorem, this cup product on $HH^*(A,A^\vee)$ provides an algebraic model for the Chas-Sullivan product on $H_*(LM)$ the homology of the free loop space of closed oriented manifold $M$. Here one must work over a field of characteristic zero and for $A$ one can take the closed (commutative) Frobenius algebra provided by Lambreschts-Stanley result \cite{LamStan} on the existence of an algebraic model with Poincar\'e duality for the rational singular cochain algebra of a closed oriented maniflold.
\end{remark}

\section{BV structure on Hochschild homology}

Although there is no action of the chains of  the moduli space of Riemann surfaces on  the Hochschild complex of an open Frobenius algebra, some parts of such action in the case of  closed Frobenius algebras can be formulated using the product and coproduct of the underlying algebra (see also \cite{WahWest}). Therefore one can write down various operations in the Hochschild homology of an open Frobenius algebra, but the desired identities have to be proved directly by giving explicitly the required homotopies. Here is one example.

\begin{theorem}\label{thm-co-op}
For $A$ a symmetric open Frobenius algebra,  the shifted Hochschild homology $HH_*(A,A)[m]$ can be naturally equipped with a BV-structure whose BV-operator is  the Connes operator and the product at the chain level (before the shift of degree) is given by  degree $m-1$
\begin{equation}\label{prod}
a_0[a_1,\cdots,a_p]\bullet b_0[b_1,\cdots,b_q]=\begin{cases} 0 & \text{ if } p>0 \\ (-1)^{|a'_0||a''_0|}a''_0a'_0b_0[b_1,\cdots, b_q].  \end{cases}
\end{equation}
\end{theorem}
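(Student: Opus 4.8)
Here is the plan, following the pattern of Theorem~\ref{themop-cop}. I would establish, in order, that the operation $\bullet$ of~(\ref{prod}) is a chain map (so that it descends to $HH_*(A,A)$), that the induced product on $HH_*(A,A)[m]$ is associative and graded commutative, and that it satisfies the $7$-term relation~(\ref{7term}) with $\Delta=B$ the Connes operator (recalling the identity $B^2=0$ on $HH_*$); together these give the asserted BV-structure, the Gerstenhaber bracket being the one furnished by the $7$-term reformulation~(\ref{7term}). The structural input that makes all of this feasible is Proposition~\ref{pro-frob}(4): for a symmetric open Frobenius algebra the element $c(z):=\sum_{(z)}(-1)^{|z'||z''|}z''z'$ lies in the \emph{center} of $A$ and has degree $m+|z|$. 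In this notation, (\ref{prod}) simply reads
\[
a_0[\,]\;\bullet\; b_0[b_1,\dots,b_q]\;=\;c(a_0)\cdot b_0[b_1,\dots,b_q],
\]
so $\bullet$ is left multiplication on the $0$-slot by the central element $c(a_0)$, and it kills any first factor carrying bracket entries. Besides Proposition~\ref{pro-frob}(4) I would use the expression $c(z)=\sum_{(1)}(-1)^{(|1''|+|z|)|1'|}\,1''z1'$ (from $\delta(z)=\sum_{(1)}1'\otimes 1''z$, a special case of Definition~\ref{defopfrob}(3)) together with its consequences $c(xy)=(-1)^{|x||y|}c(yx)$ and $c(uz)=(-1)^{m|u|}u\,c(z)$ for central $u$, both immediate from the Frobenius compatibility and the symmetry of $A$.

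The chain-map property is checked by splitting on the number of bracket entries of the first factor: for two or more entries both sides vanish; for the single entry $a_0[a_1]$ one has $a_0[a_1]\bullet b=0$, while in $(D_{Hoch}a_0[a_1])\bullet b$ only the extreme external-differential terms $\pm a_0a_1[\,]\pm a_1a_0[\,]$ survive, and these cancel by $c(xy)=(-1)^{|x||y|}c(yx)$; and for the empty bracket $a_0[\,]$ one has $D_{Hoch}a_0[\,]=(d_Aa_0)[\,]$, so one expands $D_{Hoch}\bigl(c(a_0)\cdot b_0[b_1,\dots,b_q]\bigr)$ using that $\delta$ is a chain map and a bimodule map (Definition~\ref{defopfrob}(2),(3)) --- so $d_A$ commutes with $z\mapsto c(z)$ up to the usual sign and $c(a_0)$ moves freely across the bimodule action --- and matches it termwise against $(d_Aa_0)[\,]\bullet b\pm a_0[\,]\bullet D_{Hoch}b$. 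Associativity and graded commutativity are then verified up to explicit chain homotopies (hence strictly on $HH_*$): both reduce, via the Frobenius compatibility and the algebra of the central elements $c(z)$ --- in particular $c(xy)=(-1)^{|x||y|}c(yx)$, $c(uz)=(-1)^{m|u|}u\,c(z)$, and the cyclic relation $uv[\,]\equiv(-1)^{|u||v|}vu[\,]$ modulo $D_{Hoch}$ --- to identities whose residual discrepancies are boundaries; the witnessing homotopies are built in the same style as the homotopies $H$ and $G$ in the proof of Theorem~\ref{themop-cop}, only considerably shorter, with all signs forced by the Koszul rule and the degree-$m$ shift.

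The last and hardest step is the $7$-term relation~(\ref{7term}) for $(\bullet,B)$ up to homotopy. Following the Chas--Sullivan argument used for Theorem~\ref{themop-cop}, I would first identify the would-be Gerstenhaber bracket: since $B(a_0[\,])=1[a_0]$ carries a bracket entry, the term $B(a)\bullet b$ vanishes identically, and up to homotopy the bracket of $a_0[\,]$ with $b_0[b_1,\dots,b_q]$ is the difference of the operation inserting $c(a_0)b_0$ cyclically among the $b_i$ and the operation inserting $c(a_0)$ into the empty $0$-slot while cyclically rotating the $b_i$; one then shows this agrees, up to an explicit homotopy, with the deviation of $B$ from being a derivation of $\bullet$. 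Granting that, the $7$-term relation is equivalent to a homotopy Leibniz/Jacobi identity for the bracket, which I would prove by exhibiting the requisite homotopy. The main obstacle is precisely this last homotopy: producing it explicitly and checking that applying $D_{Hoch}$ to it reproduces the seven terms with the correct Koszul signs --- the same type of computation as for the homotopies $h$, $H$, $G$ in Theorem~\ref{themop-cop}, but here one must additionally keep track of the degeneracy of $\bullet$ (it vanishes unless the first bracket is empty), which makes most of the naive ``middle'' contributions disappear and leaves only a few boundary terms to be balanced. Once this is in place, $(C_*(A,A)[m],\bullet,B)$ is a homotopy BV-algebra, and therefore $(HH_*(A,A)[m],\bullet,B)$ is a BV-algebra.
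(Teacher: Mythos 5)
Your plan follows the paper's proof essentially step for step: the chain-map property and associativity both rest on Proposition~\ref{pro-frob}(4) (your central element $c(z)$ is exactly the paper's $\sum_{(z)}(-1)^{|z'||z''|}z''z'$, and your identities $c(xy)=(-1)^{|x||y|}c(yx)$, $c(uz)=\pm u\,c(z)$ are the ones the paper uses implicitly), commutativity is witnessed by the homotopy $K(x,y)=\pm\, a_0''[a_1,\dots,a_p,a_0'b_0,b_1,\dots,b_q]$, and the $7$-term relation is reduced, Chas--Sullivan style, to compatibility of the bracket with $B$ plus a Leibniz identity, exactly as in the paper. The only simplification you miss is that associativity and the Leibniz rule in fact hold \emph{strictly} at the chain level --- the latter because all terms vanish unless the middle argument has empty bracket part --- so the ``hardest step'' you defer requires fewer homotopies than you anticipate; the only genuinely new explicit homotopy needed beyond $K$ is the one relating $\{x,y\}:=K(x,y)+(-1)^{|x||y|+m}K(y,x)$ to the deviation of $B$ from being a derivation.
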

\begin{proof}
Proposition \ref{pro-frob} (4)implies that $\bullet$ is a chain map (of degree $m$). The product $\bullet$ is strictly associative. We only have to check this for $x=a[\quad]$, $y=b[\quad]$ and $z=c[c_1,\cdots,c_n]$. Using Proposition \ref{pro-frob}, we have
\begin{equation*}\begin{split}
&(x \bullet y)\bullet z= \sum (-1)^{|a'||a''|+|(a''a'b)''||(a''a'b)'|}(a''a'b)''(a''a'b)'c_0[c_1,\cdots,c_n]=\\
 & =(-1)^{|a'||a''|+m(|a'|+|a''|)+ |b''| (|b'|+|a'|+|a''|) }\sum b''a''a'b'c_0[c_1,\cdots,c_n]\\
 & =(-1)^{|a'||a''|+m(|a'|+|a''|)+ |b'' | |b'|+|b'' |(|a'|+|a''|)+|b'' | (|a'|+|a''|) }\sum a''a' b''b'c_0[c_1,\cdots,c_n]\\
 &= (-1)^{|a'||a''|+m(|a'|+|a''|)+ |b'' | |b'|) }\sum a''a' b''b'c_0[c_1,\cdots,c_n]\\
  &= (-1)^{|a'||a''|+m|a|+m^2+ |b'' | |b'|}\sum a''a' b''b'c_0[c_1,\cdots,c_n]\\
\end{split}
\end{equation*}
On the other hand
\begin{equation*}\begin{split}
x\bullet(y\bullet z)&= (-1)^{|a'||a''|+ |b'' | |b'| }\sum a''a' b''b'c_0[c_1,\cdots,c_n]\\
&= (-1)^{|a'||a''|+ |b'' | |b'| }\sum a''a' b''b'c_0[c_1,\cdots,c_n]=(-1)^{m|x|+m}(x\bullet y)\bullet z
\end{split}
\end{equation*}
Next we prove that the product is commutative up to homotopy. Indeed the homotopy for $x=a_0[a_1,\cdots,a_p]$ and $y= b_0[b_1,\cdots,b_q]$ is given by
\begin{equation*}
K(x,y)= \sum_{(a_0)}(-1)^{(|a'_0|+1)(|a''_0|+|a_1|+\cdots +|a_p| +p)} a''_0[ a_1 , \dots , a_p ,  a_0'b_0 , b_1 , \dots , b_q],\\
\end{equation*}
that is
\begin{equation*}
D_{Hoch}K -(-1)^{m-1} K(D_{Hoch}\otimes 1+1\otimes D_{Hoch})=x\bullet y-(-1)^{|y||x|+m}y\bullet x .
\end{equation*}
Note that  $\deg K= m-1$.
It is instructive to verify the case  $p=0$. The most nontrivial case of cancellation in  $D_{Hoch}K -(-1)^{m-1} K(D_{Hoch}\otimes 1+1\otimes D_{Hoch})$ follows from
the identity 
\begin{equation*}
\begin{split}
&\sum(-1)^{(|a'_0|+1)|a''_0|+(|b_q|+1)(|a_0''|+|a_0'|+|b_0|+\cdots +|b_{q-1}|+q) } b_qa''_0[a'_0b_0,\cdots,b_{q-1}]\\
&= (-1)^{m-1}\sum(-1)^{|a_0|+|a_0''|(|a_0'|+1)+(|b_q|+1)(|b_0|+\cdots +|b_{q-1}|+q-1) } a''_0[a_0'b_qb_0,\cdots,b_{q-1}]
\end{split}
\end{equation*}
whose proof is as follows:
\begin{equation*}
\begin{split}
&\sum(-1)^{(|a'_0|+1)|a''_0|+(|b_q|+1)(|a_0''|+|a_0'|+|b_0|+\cdots +|b_{q-1}|+q) } b_qa''_0[a'_0b_0,\cdots,b_{q-1}]=\\
&\sum(-1)^{(|1'|+1)(|1''|+|a_0|)+(|b_q|+1)(|1''|+|1'|+|a_0|+|b_0|+\cdots +|b_{q-1}|+q) } b_q1''a_0[1'b_0,\cdots,b_{q-1}]=\\
&\sum(-1)^{(|1'|+1)(|1''|+|a_0|)+(|b_q|+1)(m+|a_0|+|b_0|+\cdots +|b_{q-1}|+q) } b_q1''a_0[1'b_0,\cdots,b_{q-1}]=\\
&\sum(-1)^{m+|1''||1'|+(|1''|+1)(|1'|+|a_0|)+(|b_q|+1)(m+|a_0|+|b_0|+\cdots +|b_{q-1}|+q) } b_q1'a_0[1''b_0,\cdots,b_{q-1}]=\\
&\sum(-1)^{m+m|b_q|+|b_q''|(|b_q'|+|b_p|)+(|b_q''|+1)(|b_q'|+|b_p|+|a_0|)+(|b_p|+1)(m+|a_0|+|b_0|+\cdots +|b_{q-1}|+q) } b'_qa_0[b_q''b_0,\cdots,b_{q-1}]=\\
&\sum(-1)^{m+m|b_q|+(|1''|+ |b_q|)(|1'|+|b_q|)+(|1''|+|b_q|+1)(|1'|+|b_q|+|a_0|)+(|b_q|+1)(m+|a_0|+\cdots |b_{q-1}|+q) } 1'a_0[1''b_qb_0,\cdots,b_{q-1}]\\
&=\sum(-1)^{m+|1''||1'|+|b_p|+(|1''|+|b_p|+1)(|1'|+|b_p|+|a_0|)+(|b_p|+1)(m+|a_0|+\cdots +|b_{p-1}|+q) } 1'a_0[1''b_qb_0,\cdots,b_{q-1}]\\
&=\sum(-1)^{|b_q|+(|1'|+|b_q|+1)(|1''|+|b_q|+|a_0|)+(|b_q|+1)(m+|a_0|+|b_0|+\cdots +|b_{q-1}|+q) } 1''a_0[1'b_qb_0,\cdots,b_{q-1}]\\
&=\sum(-1)^{|b_q|+(|a_0'|+|b_q|+1)(|a_0''|+|b_q|)+(|b_q|+1)(m+|a_0|+|b_0|+\cdots +|b_{q-1}|+q) } a''_0[a_0'b_qb_0,\cdots,b_{q-1}]\\
&=\sum(-1)^{m-1+|a_0|+|a_0''|(|a_0'|+1)+(|b_q|+1)(|b_0|+\cdots +|b_{q-1}|+q-1) } a''_0[a_0'b_qb_0,\cdots,b_{q-1}]\\
\end{split}
\end{equation*}

Let us examine the case of  $p=q=0$.  For  $x=a[\quad]$ and $y=b[\quad ]$: for the external differential we have  
\begin{equation*}
\begin{split}
d_1K(x,y)&= \sum (-1)^{|a''|+(|a'|+1)|a''|}a''a'b[\quad ] + \sum (-1)^{|a''| (|a'|+|b|+1)+1+(|a'|+1)|a''|}a'b a''[\quad ]  \\&=x\bullet y-  \sum (-1)^{|a''| |b|}a'b a''[\quad ]=x\bullet y-  \sum (-1)^{(|1''|+|a|) |b|}1'b 1''a[\quad ]\\&
=x\bullet y-  \sum (-1)^{(|1'|+|a|) |b|+|1'||1''|+m}1''b 1'a[\quad ]\\&=x\bullet y-  \sum (-1)^{(|b'|+|a|) |b|+|b'|(|b''|+|b|)+m}b''b'a[\quad ]\\
&=x \bullet y-  (-1)^{|a||b|+m}\sum (-1)^{(|b'||b'|}b''b'a[\quad ]=x\bullet y-  (-1)^{|x||y|+m}y\bullet x
\end{split}
\end{equation*}
As for the internal differential $d_0$, we have $d_0K(x,y)=(-1)^{m-1}K( d_0\otimes 1+ 1\otimes d_0)(x\otimes y)$, therefore 
$$
D_{Hoch}K -(-1)^{m-1} K(D_{Hoch} \otimes1 +1\otimes D_{Hoch})(x\bullet y) =x\bullet y-(-1)^{|x||y|+m}y\bullet x. 
$$
The Gerstenhaber bracket is naturally defined to be 
\begin{equation}\label{eq-gers}
\{x,y\}:=  K(x,y) +(-1)^{|x||y|+m}K(y,x).
\end{equation}
Next we prove that  the identity
\begin{equation}\label{eq-devia}
\{x,y\}= B(x\bullet y)-(-1)^m(B x \bullet y-(1)^{|x|} x\bullet B y  )
\end{equation}
holds up to homotopy.  First note that $B x\bullet y =0$ for all $x$ and $y$. A homotopy between all the remaining three terms is given
by $H+(-1)^{1+m} K(1\otimes B )$ where
  
\begin{equation*}
\begin{split}
H(x,y)=\sum_{(a), 1 \leq k\leq q +1}(-1)^{\alpha_k} 1[b_k,\cdots, b_q,a_0'',a_1,\cdots a_p,a'_0b_0,b_1,\cdots b_{k-1}],
\end{split}
\end{equation*}
with
\begin{equation*}
\begin{split}
\alpha_k&=(|a_0'|+1) (|a_0''|+|a_1|+\cdots +|a_p|+p)\\&+ (|b_k|+\cdots | b_q|+q-k-1)(|a_0'|+|a_0''|+|a_1|+\cdots +|a_p|+|b_0|+|b_1|+\cdots |b_{k-1}| +k+p+1).
\end{split}
\end{equation*}

First notice that  $[D_{Hoch}, K\circ (1\otimes B)](x\otimes y)$ is exactly  $x\bullet B y$. To analyse the rest we have to consider two cases:
\begin{itemize}
 \item Case $p\geq  1$: In this case  $B (x\bullet y)=0$. In computing $[D_{Hoch}, H] (x\otimes y)$ only two term survives. Those are the ones corresponding to $k=1$ and $k=q+1$.
 The one corresponding to $k=q+1$ gives us exactly  $K(x,y)$ and  for $k=1$ we obtain  $(-1)^{m+|x||y|} K(y,x)$. This terms is produced when we compute the last
 term of the external part of $D_{Hoch} H$:
 \begin{equation*}
\begin{split}
&(-1)^{ (|a_0'|+1) (|a_0''|+|a_1|+\cdots +|a_p|+p)+ (|b_1|+\cdots | b_q|+q)(|a_0'|+|a_0''|+|a_1|+\cdots +|a_p|+|b_0|+p)}\\
&(-1)^{(|a'_0|+|b_0|+1)(|b_1|+\cdots+|b_q|+|a_0''|+|a_1|+\cdots p+1+q)+1}a'_0b_0[b_1,\cdots,b_q,a''_0,a_1\cdots, a_p]=
\\& (-1)^{|a_0'|+(|b_0|+|b_1|+\cdots+|b_q|+q)(|a_0''|+|a_1|+\cdots+|a_p|+p+1)}a'_0b_0[b_1,\cdots,b_q,a''_0,a_1\cdots, a_p]=
\\&  (-1)^{|1'|+(|b_0|+|b_1|+\cdots+|b_q|+q)(|1''|+|a_0|+|a_1|+\cdots+|a_p|+p+1)}1'b_0[b_1,\cdots,b_q,1''a_0,a_1\cdots, a_p]=
\\&  (-1)^{m+|1'||1''|+|1''|+(|b_0|+|b_1|+\cdots+|b_q|+q)(|1'|+|a_0|+|a_1|+\cdots+|a_p|+p+1)}1''b_0[b_1,\cdots,b_q,1'a_0,a_1\cdots, a_p]=
\\&  (-1)^{m+(|b_0'|+1)(|b_0''|+|b_0|)+(|b_0|+|b_1|+\cdots+|b_q|+q)(|b_0'|+|a_0|+|a_1|+\cdots+|a_p|+p+1)}b''_0[b_1,\cdots,b_q,b_0'a_0,a_1\cdots, a_p]=
\\&  (-1)^{m+ |x||y|+(|b_0'|+1)(|b_0''|+|b_1|+\cdots+|b_q|+q)}b''_0[b_1,\cdots,b_q,b_0'a_0,a_1\cdots, a_p]=(-1)^{m+|x||y|} K(y,x),
\end{split}
\end{equation*}
therefore $[D_{Hoch}, H] (x\otimes y)= \{x,y\}$.
\item  Case $p=0$: In this case, comparing to the previous case, an extra term in $[D_{Hoch}, H] (x\otimes y)$ shows up. This is  the term where $a_0''$ and $a_0'b_0$ are multiplied.
This  is precisely
\begin{equation*}
\begin{split}
B(x\bullet y)=\sum_{i=1}^{n} \sum_{(a)}\pm1[b_i,\cdots, b_q, a''_0a'_0b_0,b_1,\cdots, b_{i-1} ]
\end{split}
\end{equation*}
which is not necessarily zero if $p=0$.
\end{itemize}
Finally we prove the that Leibniz identity (before  the shift of the  grading)
$$
\{x, y\bullet z\}= \{x, y\}\bullet z+ (-1)^{(m-1+|x|)|y|} y\bullet \{ x,z\}
$$  
holds up to homotopy. We prove that it in fact it holds strictly.
First note that if  $y\in \oplus_{n>0} ( A\otimes (sA)^{\otimes n})$ then all the terms vanish. Therefore we suppose that  $y=b[\quad ]$. Since $\{x, y\} \in  \oplus_{n>0} ( A\otimes (sA)^{\otimes n})$,
it suffices to prove that $\{x, y\bullet z\}=  (-1)^{(m-1+|x|)|y|} y\bullet \{ x,z\}$, and to that end we check the follow identities
$$
K(x, y\bullet z)= (-1)^{(m-1+|x|)|y|} y\bullet K(x,z)
$$
and 
$$
K( y\bullet z,x)=(-1)^{(m-1+|x|)|y|+ |x||y|+m} y\bullet K(z,x).
$$
We prove the first identity, the second one is similar.  For $x=a_0[a_1,\cdots,a_p]$ and $z=c_0[c_1,\cdots c_q]$, we have
\begin{equation*}
\begin{split}
K(x, y\bullet z)= (-1)^{|b''||b'|+ (|a_0'|+1)(|a_0''|+|a_1|\cdots+ |a_p|+p)}a_0''[a_1,\dots, a_p,a_0'b''b'c_0,c_1\cdots, c_q]
\end{split}
\end{equation*}
and 
\begin{equation*}
\begin{split}
y\bullet  K(x, z)= (-1)^{|b''||b'|+ (|a_0'|+1)(|a_0''|+|a_1|\cdots+ |a_p|+p)}b''b' a_0''[a_1,\dots, a_p,a_0'c_0,c_1\cdots, c_q]
\end{split}
\end{equation*}
The  claimed equality is proved as follows:
\begin{equation*}
\begin{split}
&(-1)^{|b''||b'|+ (|a_0'|+1)(|a_0''|+|a_1|\cdots+ |a_p|+p)}a_0''[a_1,\dots ,a_0'b''b'c_0,\cdots, c_q]\\&= (-1)^{m|a_0|+|b''||b'|+ (|a_0|+|1'|+1)(|1''|+|a_1|\cdots+ |a_p|+p)}1''[a_1,\dots ,a_01'b''b'c_0,\cdots, c_q]\\&
(-1)^{m|a_0|+|b''||b'|+ |1'||1''|+m+(|a_0|+|1''|+1)(|1'|+|a_1|\cdots+ |a_p|+p)}1'[a_1,\dots ,a_01''b''b'c_0,\cdots, c_q]\\
&=(-1)^{m|a_0|+|b''||b'|+ |(b''b')'|(|(b''b')''|+|b''b'|)+m+(|a_0|+|(b''b')''|+|b''b'|+1)(|(b''b')'|+|a_1|\cdots+ |a_p|+p)}\\ & \qquad  \qquad  \qquad  \qquad  \qquad (b''b')'[a_1,\dots ,a_0(b''b')''c_0,\cdots, c_q]=\\
&=(-1)^{m|a_0|+m(m+|b|)+|b''||b'|+ (|b''b'|+|1'|)(|1''|+|b''b'|)+m+(|a_0|+|1''|+|b''b'|+1)(|1'|+|b''b'|+|a_1|\cdots+ |a_p|+p)}\\ & \qquad  \qquad  \qquad  \qquad  \qquad b''b'1'[a_1,\dots ,a_01''c_0,\cdots, c_q]\\
&=(-1)^{m|a_0|+m(m+|b|)+|b''||b'|+ (m+|b|+|1'|)(|1''|+m+|b|)+m+(|a_0|+|1''|+|b|+m+1)(|1'|+m+|b|+|a_1|\cdots+ |a_p|+p)}\\ & \qquad  \qquad  \qquad  \qquad  \qquad b''b'1'[a_1,\dots ,a_01''c_0,\cdots, c_q]\\
&=(-1)^{m|a_0|+|b''||b'|+ |1'||1''|+m+|b|+m+(|a_0|+|1''|+|b|+m+1)(|1'|+m+|b|+|a_1|\cdots+ |a_p|+p)} b''b'1'[a_1,\dots ,a_01''c_0,\cdots, c_q]\\
&=(-1)^{m|a_0|+|b''||b'|+ |b|+m+(|a_0|+|1'|+|b|+m+1)(|1''|+m+|b|+|a_1|\cdots+ |a_p|+p)} b''b'1''[a_1,\dots ,a_01'c_0,\cdots, c_q]\\
&=(-1)^{|b''||b'|+ |b|+m+(|a'_0|+|b|+m+1)(m+|b|+|a_0''|+|a_1|\cdots+ |a_p|+p)} b''b'a_0''[a_1,\dots ,a_0'c_0,\cdots, c_q]\\
&=(-1)^{m|x|+(m-1+|x|)|y|+|b''||b'|+ (|a_0'|+1)(|a_0''|+|a_1|\cdots+ |a_p|+p)}b''b' a_0''[a_1,\dots, a_p,a_0'c_0,c_1\cdots, c_q].
\end{split}
\end{equation*}
\end{proof}
\section{Frobenius compatibility of the product and coproduct} 
As we mentioned  previously we are inspired by the algebraic structures of the homology of free loop spaces.  Cohen-Godin \cite{CG} result holds even for the manifolds which are not closed. The difference with the closed case would be that there won't be a counit for the underlying algebra structure. The coalgebra structure generically has no counit, otherwise the homology of the free loop space would have the homotopy type of a finite dimensional manifold which is not true except for very special kind of aspherical manifolds. Therefore it is natural to expect a Frobenius compatibility condition (Definition \ref{defopfrob}, (3)) between the product and coproduct.

\begin{theorem} Let $A$ be a symmetric open Frobenius algebra. The product  $\bullet$  \eqref{prod} and coproduct $\theta$ \eqref{eq-cup-frob}  on $HH_*(A,A)[m]$ satisfy the Frobenius compatibility conditions, Definition \ref{defopfrob}, (3).

\end{theorem}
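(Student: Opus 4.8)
The plan is to verify the two half-identities of Definition \ref{defopfrob}(3) — that $\theta$ is a left, respectively right, module map over $\bullet$ — directly on generators, exploiting the very rigid shape of $\bullet$. First I would reduce the problem: since $a_0[a_1,\dots,a_p]\bullet(-)$ vanishes as soon as $p\geq 1$, the only generators that matter are $x=a[\,]$ and $y=b_0[b_1,\dots,b_q]$, and for such an $x$ Proposition \ref{pro-frob}(4) says that $\bar a:=\sum_{(a)}(-1)^{|a'||a''|}a''a'$ is \emph{central} in $A$ (of degree $m+|a|$), so that \eqref{prod} reads simply
\[
x\bullet y=\bar a\cdot b_0\,[b_1,\dots,b_q],
\]
i.e. $\bullet$ merely inserts the central element $\bar a$ into the $b_0$-slot. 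When instead $x=a_0[a_1,\dots,a_p]$ with $p\geq 1$, both sides of the left-module identity $\theta(x\bullet y)=\sum_{(y)}\pm\,x\bullet y'\otimes y''$ vanish term-by-term (each $x\bullet(-)=0$), whereas in the right-module identity $\theta(x\bullet y)=\sum_{(x)}\pm\,x'\otimes x''\bullet y$ only the top term of $\theta x$ survives (its second leg must have simplicial degree $0$), and I would check that the residual expression $\sum\pm\,a_0''[a_1,\dots,a_p]\otimes\overline{a_0'}\,b_0[b_1,\dots,b_q]$ is a Hochschild boundary — hence $0$ on $HH_*$ — or else supply an explicit correcting homotopy in the style of $K,H,G$ from Sections 3--4.

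For the left-module identity with $x=a[\,]$ I would expand $\theta(x\bullet y)=\theta(\bar a b_0[b_1,\dots,b_q])$ via \eqref{eq-cup-frob}: the coproduct now acts on the degree-zero entry $\bar a b_0$. The key step is to route $\bar a$ correctly through $\delta$ using its centrality together with the bimodule-map property of $\delta$ on $A$ (Definition \ref{defopfrob}(3)), namely $\delta(\bar a b_0)=(-1)^{|\bar a||b_0|}\delta(b_0\bar a)=\pm\sum_{(b_0)}b_0'\otimes b_0''\bar a$, so that $\bar a$ becomes attached to the leg $b_0''$ which $\theta$ places in the $b_0$-slot of the first tensor factor. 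On the other side one expands $\theta y$ first and then applies $x\bullet(-)$ to its left factor, which inserts $\bar a$ into precisely that same slot, $b_0''[b_1,\dots,b_i]\mapsto\bar a b_0''[b_1,\dots,b_i]$. Matching the two sums over the splitting index $i$ is then a Koszul-sign computation using $b_0''\bar a=(-1)^{|b_0''||\bar a|}\bar a b_0''$ and $|\bar a|=m+|a|$; I expect this identity to hold strictly at the chain level.

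For the right-module identity with $x=a[\,]$ I would use $\theta(a[\,])=\sum_{(a)}(-1)^{|a'||a''|}(a''[\,])\otimes(a'[\,])$, so that $\sum_{(x)}\pm\,x'\otimes(x''\bullet y)=\sum_{(a)}\pm\,a''[\,]\otimes\overline{a'}\,b_0[b_1,\dots,b_q]$, and then rewrite the iterated coproduct hidden inside $\overline{a'}$ by coassociativity of $\delta$ — the relation $\sum(x')'\otimes(x')''\otimes x''=\pm\sum x'\otimes(x'')'\otimes(x'')''$ recorded after Definition \ref{defopfrob} — together with centrality, in order to match it against the expansion of $\theta(\bar a b_0[b_1,\dots,b_q])$ obtained above. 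Conceptually everything here reduces to the slogan "centrality of $\bar a$ lets it slide across $\delta$," so there is no deep difficulty. The hard part will be purely the sign bookkeeping, made delicate by the built-in twist in \eqref{eq-cup-frob} — it stores the \emph{second} Sweedler leg $a_0''$ in the \emph{first} tensor factor, opposite to the convention of Definition \ref{defopfrob}(3) — so one must track carefully which of $b_0',b_0''$ carries $\bar a$ after each manipulation; and, for the right-module identity with $x$ of positive simplicial degree, one must exhibit the explicit Hochschild boundary (or homotopy) annihilating the leftover term.
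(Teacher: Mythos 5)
Your overall architecture agrees with the paper's: you split the compatibility into the left-module identity $\theta(x\bullet y)=\sum\pm\, x\bullet y'\otimes y''$ and the right-module identity $\theta(x\bullet y)=\sum\pm\, x'\otimes x''\bullet y$, reduce using the vanishing of $a_0[a_1,\dots,a_p]\bullet(-)$ for $p\geq 1$, and for the left-module identity with $x=a[\,]$ you slide the central element $\bar a$ through $\delta$ via Proposition \ref{pro-frob}(4) and the bimodule property of $\delta$. That half is correct and is exactly the paper's argument: the left-module identity holds strictly at the chain level, and for $p\geq 1$ both of its sides vanish term-by-term.

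The right-module side, however, has two gaps. First, for $x=a[\,]$ you claim that $\sum\pm\, a''[\,]\otimes\overline{a'}\,b_0[b_1,\dots,b_q]$ can be \emph{matched} against the expansion of $\theta(\bar a b_0[b_1,\dots,b_q])$ by coassociativity and centrality, with "no deep difficulty" beyond signs. This cannot succeed at the chain level: $\theta(\bar a b_0[b_1,\dots,b_q])$ is a sum over all splittings $0\leq k\leq q$, placing $b_1,\dots,b_k$ in the first tensor factor, whereas $(1\otimes\bullet)(\theta\otimes 1)(x\otimes y)$ has an empty tensor string in its first factor and all of $b_1,\dots,b_q$ in the second. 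For $q\geq 1$ these are genuinely different chains, and no manipulation of Sweedler legs identifies them; one must exhibit an explicit homotopy — the paper's $G(x,y)=\sum\pm\,1''[a'b_0,b_1,\dots,b_k]\otimes a''1'[b_{k+1},\dots,b_q]$, whose Hochschild boundary produces the $k$-indexed sum from its first external face and the single wrap-around term from its last. Second, for $x$ of positive simplicial degree you correctly isolate the one surviving term $\sum\pm\, a_0''[a_1,\dots,a_p]\otimes\overline{a_0'}\,b_0[\dots]$ and observe it must be killed up to homotopy, but you only promise a homotopy "in the style of $K,H,G$" without producing one; since the other side is identically zero here, constructing that nullhomotopy (the paper's $H(x,y)=\sum\pm\,1''a_0''[a_1,\dots,a_p]\otimes 1'[a_0'b_0,b_1,\dots,b_q]$, verified through two nontrivial cancellations that use the symmetric hypothesis) is the entire content of the case. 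As written, your proposal establishes only the left-module half of the theorem.
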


\begin{proof}
We have to prove that  $\theta$ is a map of (degree $m$) left and right $HH_*(A,A)$-modules, that is $(-1)^m (1\otimes \bullet ) \circ (\theta\otimes 1)= \theta \circ \bullet$ at homology level.
 First we  consider the right $HH_*(A)$-module structure.  Let  $x= a_0[a_1,\cdots, a_m]$ and  $y= b_0[b_1,\cdots, b_n] \in C_*(A,A)$.  If $m\geq 1$, then  $\theta(x\bullet y)= 0$, and 
\begin{equation*}\begin{split}
\theta x \bullet y &=(\sum_{(a_0), 1\leq k\leq m} (-1)^{\sigma_k |a_0'|}a''_0[a_1,\cdots,a_k] \otimes a'_0[a_{k+1},\cdots, a_m ])\bullet y\\
&=(\sum_{(a_0), 1\leq k\leq m}  (-1)^{\sigma_k |a_0'|} a''_0[a_1,\cdots,a_k] \otimes ( a'_0[a_{k+1},\cdots, a_m ] \bullet y)\\
&= \sum_{(a_0),(a_0')}(-1)^{\sigma_m |a_0'|+|(a_0')'||(a_0')''|}a''_0[a_1,\cdots,a_m] \otimes (a'_0)''(a'_0)'b_0[b_1,\cdots,b_n]\\
& =\sum_{(a_0), (1)} (-1)^{m+\sigma_m |a_0'|+|a_0'||(a_0'')'|} (a''_0)''[a_1,\cdots,a_m] \otimes (a''_0)'a'_0 b_0[b_1,\cdots,b_n]\\
& =\sum_{(a_0), (1)} (-1)^{m+\sigma_m |a_0'|+|a_0'|(|1''| +|a''_0|)} 1''a''_0[a_1,\cdots,a_m] \otimes 1'a'_0 b_0[b_1,\cdots,b_n].
\end{split}
\end{equation*}
So we have to prove the latter is homotopic to zero. The homotopy is given by 
\begin{equation*}\begin{split}
H(x,y)&=\sum_{(a_0), (1)} (-1)^{m+\sigma_m (|a_0'|+1)+|a_0'|(|1''| +|a''_0|)+|1'|} 1''a''_0[a_1,\cdots,a_m] \otimes 1'[ a'_0 b_0,b_1,\cdots,b_n]
\end{split}
\end{equation*}
Two non-trivial cancellations occur in computing  $D_{Hoch}H-(-1)^{2m} H(D_{Hoch}\otimes 1+I\otimes D_{Hoch}))$  as consequences of the follow identities (we omit the signs for the sake of simplicity):
\begin{equation*}\begin{split}
&a_m1''a''_0[a_1,\cdots,a_{m-1}] \otimes 1'a'_0 b_0[b_1,\cdots,b_n]=a_m1'a''_0[a_1,\cdots,a_{m-1}] \otimes 1''a'_0 b_0[b_1,\cdots,b_n]\\
&=a_m'a''_0[a_1,\cdots,a_{m-1}] \otimes a_m''a'_0 b_0[b_1,\cdots,b_n]=1'a''_0[a_1,\cdots,a_{m-1}] \otimes 1'' a_ma'_0 b_0[b_1,\cdots,b_n]\\
&=1'(a_ma_0)''[a_1,\cdots,a_{m-1}] \otimes 1'' (a_ma_0)' b_0[b_1,\cdots,b_n]\\
\end{split}
\end{equation*}
and
\begin{equation*}\begin{split}
 &1''a''_0[a_1,\cdots,a_{m}] \otimes b_n1'[a'_0 b_0,b_1,\cdots,b_{n-1}] =  b_n''a''_0[a_1,\cdots,a_{m}] \otimes b_n'[a'_0 b_0,b_1,\cdots,b_{n-1}] \\
 &= 1'' b_na''_0[a_1,\cdots,a_{m}] \otimes 1'[a'_0 b_0,b_1,\cdots,b_{n-1}] =1'' b_n1''[a_1,\cdots,a_{m}] \otimes 1'[a_01' b_0,b_1,\cdots,b_{n-1}] 
 \\ &=1''b_n1'[a_1,\cdots,a_{m}] \otimes 1'[a_01'' b_0,b_1,\cdots,b_{n-1}]  =1'' b_n'[a_1,\cdots,a_{m}] \otimes 1'[a_0b_n'' b_0,b_1,\cdots,b_{n-1}] \\
 &=1'' 1'[a_1,\cdots,a_{m}] \otimes 1'[a_01''b_n b_0,b_1,\cdots,b_{n-1}] =1'' 1''[a_1,\cdots,a_{m}] \otimes 1'[a_01'b_n b_0,b_1,\cdots,b_{n-1}] \\
 &=1'' a_0''[a_1,\cdots,a_{m}] \otimes 1'[a_0'b_n b_0,b_1,\cdots,b_{n-1}] 
 \end{split}
\end{equation*}
 
 If $m=0$, then for $x=a[~]$
\begin{equation*}
 \begin{split}
 & \theta(x\bullet y)=\theta(\sum_{(a)} (-1)^{|a'||a''|} a''a'b_0[b_1,\cdots,b_n])\\
		  &=\sum_{\substack{(a),(a''a'b_0)\\ 0\leq k\leq n}}(-1)^{|a'||a''|+ |(a''a'b_0)'|(|(a''a'b_0)''|+|b_1|+\cdots |b_k|+k) )}  (a''a'b_0)''[b_1,\cdots,b_k] \otimes (a''a'b_0)'[b_{k+1},\cdots,b_n]\\
		  &=\sum_{\substack{(a),(1)\\ 0\leq k\leq n}} (-1)^{|a'||a''|+ m|a''|+ (|(a'b_0)'|+|a''|)( |(a'b_0)''|+|b_1|+\cdots |b_k|+k) }  (a'b_0)''[b_1,\cdots,b_k] \otimes a''(a'b_0)'[b_{k+1},\cdots,b_n]\\		 
		  &=\sum_{\substack{(a),(1)\\ 0\leq k\leq n}} (-1)^{|a'||a''|+ m|a''|+ (|1'|+|a''|) (|1''| +|a'|+|b_0|+|b_1|+\cdots |b_k|+k)}  1'' a'b_0[b_1,\cdots,b_k] \otimes a''1'[b_{k+1},\cdots,b_n]\\
 \end{split}
\end{equation*}
On the other hand,
\begin{equation*}
 \begin{split}
& [(-1)^m (1\otimes \bullet ) \circ (\theta\otimes 1)](x\otimes y)=\sum_{(a)}(-1)^{m+m|a''|+|a'||a''|}a''[~]\otimes (a'[~]\bullet y) \\
&= \sum_{(a)}(-1)^{m+|a''|(|a|+|a''|)}a''[~]\otimes (a'[~]\bullet y) \\
 &=\sum_{(a),(a'')} (-1)^{m+|a''|(|a|+|a''|)+|(a')'||(a')''|} a''[~]\otimes (a')''(a')'b_0[b_1,\cdots ,b_n]\\
	  &=\sum_{(a),(a'')} (-1)^{m|a'|+|(a'')''|(|a|+|(a'')''|)+|a'||(a'')'|} (a'')''[~]\otimes (a'')'a'b_0[b_1,\cdots ,b_n]\\
	  &=\sum_{(a),(a'')} (-1)^{m|a''|+m|a'|+|1''|(|a|+|1''|)+|a'|(|a''|+|1'|)} 1''[~]\otimes a'' 1'a'b_0[b_1,\cdots ,b_n]\\
	    &=\sum_{(a),(a'')} (-1)^{m|a'|+m|a''|+|1'|(|a|+|1'|)+|a'|(|a''|+|1''|)+|1'||1''|+m} 1'[~]\otimes a'' 1''a'b_0[b_1,\cdots ,b_n]\\
	     &=\sum_{(a),(a'')} (-1)^{m(|a'|+|a''|)+|1'|(|a|+m)+|a'|(|a''|+|1''|)+m} 1'[~]\otimes a'' 1''a'b_0[b_1,\cdots ,b_n]\\
	     &=\sum_{(a),(a'')} (-1)^{m(|a'|+|a''|)+|(a'b_0)'|(|a|+m)+|a'|(|a''|+|(a'b_0)''|+ |a'b_0|)+m} (a'b_0)'[~]\otimes a'' (a'b_0)''[b_1,\cdots ,b_n]\\
 &=\sum_{(a),(a'')} (-1)^{m(|a'|+|a''|)+m|a'b_0|+(|a'b_0|+|1'|)(|a|+m)+|a'|(|a''|+|1''|+ |a'b_0|)+m } a'b_0 1'[~]\otimes a'' 1''[b_1,\cdots ,b_n]\\
  &=\sum_{(a),(a'')} (-1)^{m(|a'|+|a''|)+m|a'b_0|+(|a'b_0|+|1''|)(|a|+m)+|a'|(|a''|+|1'|+ |a'b_0|)+|1'||1''|} a'b_0 1'[~]\otimes a'' 1''[b_1,\cdots ,b_n]\\
    &=\sum_{(a),(a'')} (-1)^{m|b|+m|a'|+m|a''|+|1'|1''|+|a''|b|+|1''||a''|} a'b_0 1''[~]\otimes a'' 1'[b_1,\cdots ,b_n]\\
        \end{split}
\end{equation*}
The homotopy between $(-1)^m (1\otimes \bullet ) \circ (\theta\otimes 1) (x\otimes y)$  and  $\theta \circ \bullet (x\otimes y)$ is given by

\begin{equation*}
 G(x,y):=\sum_{\substack{(1),(1)\\ 0\leq k\leq n}} (-1)^{|1''|+|a'||a''|+ m|a''|+ (|1'|+|a''|) (|1''| +|a'|+|b_0|+\cdots |b_k|+k)}1''[a'b_0,b_1,\cdots,b_k] \otimes a''1'[b_{k+1},\cdots,b_n],
\end{equation*}

The left $HH_*(A,A)$-module condition $(-1)^m \theta\circ \bullet=(\bullet \otimes 1 )(1\otimes \theta)$ actually  holds at the chain level.
The only nontrivial case is when $m=0$, otherwise both $x\bullet \theta(y)$ and $\theta(x\bullet y)$ are zero.
For $x=a[~]$ we have, 
\begin{equation*}
 \begin{split}
 (\bullet \otimes 1 )(1\otimes \theta)(x\otimes y)& =(-1)^{m|x|}x\bullet \theta(y)\\&= \sum_{\substack{(b_0)\\0\leq k\leq n}} (-1)^{m|a|+|b_0'|(|b_0''|+\cdots +|b_k|+k)}(x\bullet b_0''[b_1,\cdots ,b_k])\otimes b'_0[b_{k+1},\cdots , b_n]\\
		  &= \sum_{\substack{(b_0),(a_0)\\0\leq k\leq n}}(-1)^ {m|a|+|a''||a'|+|b_0'|(|b_0''|+\cdots +|b_k|+k)} a''a'b_0''[b_1,\cdots ,b_k]\otimes b'_0[b_{k+1},\cdots , b_n].
 \end{split}
\end{equation*}
On the other hand,
\begin{equation*}
 \begin{split}
& (-1)^m (\theta\circ \bullet) (x\otimes y)= (-1)^m\theta(x\bullet y)=\theta(\sum_{\substack{(a)}} (-1)^{m+|a'||a''|} a''a'b_0[b_1,\cdots ,b_n])\\&=\theta(\sum_{\substack{(a)}} (-1)^{m+|a'||a''|+|b_0|(m+|a|)} b_0 a''a'[b_1,\cdots ,b_n])\\
 &= \sum(-1)^{m+|a'||a''|+|b_0|(|a''|+|a'|)+|(b_0a''a')'|( |(b_0a''a')''|+\cdots +|b_k|+k)}  (b_0a''a')''[b_1,\cdots ,b_k]\otimes (b_0a''a')'[b_{k+1},\cdots , b_n]\\
&= \sum(-1)^{m+|a'||a''|+|b_0|(|a''|+|a'|)+|b_0'|(|a''|+|a'|+ |b_0''|+|b_1|+\cdots +|b_k|+k)}  b''_0a''a'[b_1,\cdots ,b_k]\otimes b_0'[b_{k+1},\cdots , b_n]\\
&= \sum(-1)^{m+|a'||a''|+(|b_0''|+|b_0|)(|a''|+|a'|)+|b_0'|(|a''|+|a'|+ |b_0''|+|b_1|+\cdots +|b_k|+k)}  a''a' b''_0[b_1,\cdots ,b_k]\otimes b_0'[b_{k+1},\cdots , b_n]\\
&= \sum(-1)^{m+|a'||a''|+(|b_0''|+|b_0|+|b_0'|)(|a''|+|a'|)+|b_0'|(|b_0''|+|b_1|+\cdots +|b_k|+k)}  a''a' b''_0[b_1,\cdots ,b_k]\otimes b_0'[b_{k+1},\cdots , b_n]\\
&= \sum(-1)^{m+|a'||a''|+m(|a|+m)+|b_0'|(|b_0''|+|b_1|+\cdots +|b_k|+k)}  a''a' b''_0[b_1,\cdots ,b_k]\otimes b_0'[b_{k+1},\cdots , b_n]\\
&= \sum(-1)^{|a'||a''|+m|a|+|b_0'|(|b_0''|+|b_1|+\cdots +|b_k|+k)}  a''a' b''_0[b_1,\cdots ,b_k]\otimes b_0'[b_{k+1},\cdots , b_n]\\
 \end{split}
\end{equation*}
\end{proof}

\section{Suspended BV structure on the relative Hochschild homology  of commutative open Frobenius algebras}

In this section we exhibit a BV structure on the relative Hochschild homology of a  commutative symmetric open Frobenius algebra. In particular we introduce a product on the shifted relative Hochschild homology of symmetric commutative Frobenius algebras which should be an algebraic model for Goresky-Hingston \cite{GorHing} product on $H^*(LM,M)$. 
 
For a commutative DG-algebra $A$ the relative Hochschild chains are defined to be
 \begin{equation*}
 \widetilde{C}_*(A,A)= \oplus_{n\geq 1} A\otimes \bar{A}^{\otimes n}.
 \end{equation*}
equipped with the Hochschild differential. Since $A$ is commutative, $ \widetilde{C}_*(A,A)$ is stable under the Hochschild differential and fits in the split short exact sequence of complexes,
\begin{equation*}
\xymatrix{0 \ar[r] & (A, d_A) \ar[r] & C_*(A,A)\ar[r] &  \widetilde{C}_*(A,A)\ar[r] & 0}
\end{equation*}
The homology of  $\widetilde{C}_*(A,A)$ is denoted $\widetilde{HH}_*(A,A)$ and is called the relative Hochschild homology of $A$.

\begin{theorem}\label{thm-BV_hom}  The shifted relative Hochschild homology $\widetilde{HH}_*(A,A)[m-1]$ of a degree $m$ commutative symmetric  open Frobenius algebra $A$ is a BV algebra whose BV-operator is the Connes operator and the product at the chain level on $\tilde{C}_*(A,A)$  (before the shift ) is given by
\begin{equation*}
\begin{split}
x\ast y & = \sum_{(a_0b_0)} (-1)^{|(a_0b_0)'|+(|(a_0b_0)''|+|b_0|+1)(|a_1|+\cdots +|a_p| +p)} (a_0b_0)'[ a_1 , \dots , a_p ,  (a_0b_0)'' , b_1 , \dots , b_q]\\
& = \sum_{a_0)}(-1)^{|a'_0|+(|a''_0|+1)(|a_1|+\cdots +|a_p|+p )} a'_0[ a_1 , \dots , a_p ,  a_0''b_0 , b_1 , \dots , b_q]\\
& = \sum_{b_0} (-1)^{(m+1)|a_0|+ |b'_0|+(|b''_0|+|b_0|+1)(|a_1|+\cdots +|a_p| +p)}   a_0b_0'[ a_1 , \dots , a_p,  b_0'' , b_1 , \dots , b_q]\\
\end{split}
\end{equation*}
for $x=a_0 [a_1,\cdots ,a_p]$ and $y= b_0 [b_1, \cdots ,b_q] \in  \widetilde{C}_*(A) $.
\end{theorem}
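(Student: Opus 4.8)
The plan is to run the programme of the proof of Theorem~\ref{thm-co-op}, but on the relative complex $\widetilde{C}_*(A,A)$, exploiting one shortcut: as a bilinear operation on $\widetilde{C}_*(A,A)$, the product $\ast$ coincides up to the sign $(-1)^m$ with the restriction to $\widetilde{C}_*(A,A)$ of the homotopy $K$ that witnesses the homotopy-commutativity of $\bullet$ \eqref{prod}. Reading off $K$ from the proof of Theorem~\ref{thm-co-op} and using cocommutativity of $\delta$ (Proposition~\ref{pro-frob}(3), available since $A$ is commutative and symmetric) to interchange the two Sweedler factors of $a_0$, one checks that $K|_{\widetilde{C}_*(A,A)}=(-1)^m\,\ast$. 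Now $\bullet$ annihilates every chain whose first tensor factor has positive bar length, so it vanishes identically on $\widetilde{C}_*(A,A)\otimes\widetilde{C}_*(A,A)$; hence the chain-homotopy identity $D_{Hoch}K-(-1)^{m-1}K(D_{Hoch}\otimes 1+1\otimes D_{Hoch})=x\bullet y-(-1)^{|x||y|+m}y\bullet x$ degenerates on the relative complex to the statement that $K$, and therefore $\ast$, is a chain map of degree $m-1$. That $\ast$ takes values in $\widetilde{C}_*(A,A)$ and is compatible with normalization is immediate, since its output always has bar length $p+q+1\ge 1$ (the inserted entry $a_0''b_0$ read modulo $\kk$); likewise the Connes operator $B$ preserves $\widetilde{C}_*(A,A)$, carrying a chain of positive bar length to one of bar length $\ge 2$ with $1\in A$ in the zeroth slot. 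One should also record that the three displayed formulas for $\ast$ agree: this is Definition~\ref{defopfrob}(3), which computes $\delta(a_0b_0)$ either as $\sum a_0'\otimes a_0''b_0$ or as $\sum(-1)^{m|a_0|}a_0b_0'\otimes b_0''$.

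\textbf{Commutativity and associativity on homology.} Next I would show that $\ast$ is graded commutative and associative on $\widetilde{HH}_*(A,A)$. Commutativity is witnessed by an explicit homotopy $\widetilde{K}(x,y)$ of degree $m-2$, built by the recipe of the homotopy $K$ of Theorem~\ref{thm-co-op}: it shuffles the bar word of $y$ into that of $x$ while a Sweedler factor of $a_0$ sits in the zeroth slot, the cyclic ($B$-type) terms rotating the basepoint, so that $D_{Hoch}\widetilde{K}-(-1)^{m-2}\widetilde{K}(D_{Hoch}\otimes 1+1\otimes D_{Hoch})=x\ast y-(-1)^{|x||y|+m-1}y\ast x$. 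Associativity, by contrast, holds only up to homotopy: at the chain level $(x\ast y)\ast z$ and $x\ast(y\ast z)$ differ by the position of one Sweedler factor of $a_0$ relative to the insertion point of $b_0$ (in one the factor $b_0$ is split, in the other it is not), and the two are matched, up to a boundary, by a homotopy assembled from coassociativity of $\delta$, the module property of Definition~\ref{defopfrob}(3), and commutativity of $A$. Having done this, I set $\{x,y\}:=\widetilde{K}(x,y)+(-1)^{|x||y|+m}\widetilde{K}(y,x)$ for the candidate Gerstenhaber bracket, in parallel with \eqref{eq-gers}.

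\textbf{BV structure.} It then remains to promote $(\widetilde{HH}_*(A,A)[m-1],\ast)$ to a BV-algebra with operator $B$. Since $B^2=0$ (Connes), by the second formulation of Definition~\ref{BV-def} it suffices to check the $7$-term relation \eqref{7term} for the pair $(\ast,B)$ on homology; the Gerstenhaber bracket then reads $\{x,y\}=B(x\ast y)-(-1)^{m-1}\bigl(Bx\ast y-(-1)^{|x|}x\ast By\bigr)$, the analogue of \eqref{eq-devia}. Both the identification of this bracket with the explicit $\widetilde{K}(x,y)+(-1)^{|x||y|+m}\widetilde{K}(y,x)$ of the previous paragraph, and the $7$-term relation, are handled exactly as in the proof of Theorem~\ref{thm-co-op}. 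The identification is witnessed by a homotopy of the shape $\widetilde{H}\pm\widetilde{K}\circ(\id\otimes B)$, the relative counterpart of the homotopy ``$H+(-1)^{1+m}K(1\otimes B)$'' used there: in $D_{Hoch}\widetilde{H}\pm\widetilde{H}(D_{Hoch}\otimes\id+\id\otimes D_{Hoch})$ the two extreme values of the cyclic index of $B$ produce precisely $\widetilde{K}(x,y)$ and $(-1)^{|x||y|+m}\widetilde{K}(y,x)$, and, when $x$ has zero bar length, the wrap-around term of $B$ supplies the summand $B(x\ast y)$. The $7$-term relation, equivalently the Leibniz rule $\{x,y\ast z\}=\{x,y\}\ast z+(-1)^{(m-2+|x|)|y|}y\ast\{x,z\}$, is then obtained by a direct chain-level cancellation argument, absorbing one more homotopy if necessary; note that here, unlike for $\bullet$, this identity does not collapse to the single case of a bar-length-zero argument, because $\ast$ does not kill chains of positive bar length.

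\textbf{Main obstacle.} As in Section~4, the computational core is the construction of the homotopy $\widetilde{H}$ realizing the BV deviation identity, together with the attendant sign bookkeeping; the genuinely new point relative to the $\bullet$-case is the associativity homotopy, which has no counterpart there since $\bullet$ is strictly associative, and (closely related) the fact that the Leibniz verification now requires a real cancellation rather than the degenerate reduction used for $\bullet$. Keeping the Koszul signs coherent across $\widetilde{K}$, $\widetilde{H}$, the associativity homotopy and the chain-map reduction $\ast=(-1)^mK|_{\widetilde{C}_*(A,A)}$ is where essentially all the effort goes; the remainder is a transcription of the arguments already carried out for $\theta$, $\bullet$ and $K$ in Sections~3 and~4.
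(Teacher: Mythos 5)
Your plan is essentially the paper's: an explicit homotopy for commutativity of $\ast$ (the paper's $T$, your $\widetilde{K}$), the bracket defined as its symmetrization, a further homotopy identifying that bracket with the deviation of $B$ from being a derivation (split according to the two halves $B_1+B_2$ of the Connes operator), and the $7$-term relation reduced to the Leibniz rule for the bracket. Two points of divergence are worth recording. First, your shortcut for the chain-map property is a genuine improvement in economy: since every element of $\widetilde{C}_*(A,A)\otimes\widetilde{C}_*(A,A)$ has both bar lengths $\geq 1$, the right-hand side of the homotopy identity for $K$ in Theorem \ref{thm-co-op} vanishes there, and cocommutativity of $\delta$ (Proposition \ref{pro-frob}(3)) does give $K|_{\widetilde{C}\otimes\widetilde{C}}=(-1)^m\ast$; the paper instead verifies the chain-map property of $\ast$ by a direct sign computation. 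Second, and in the other direction, you assert that associativity of $\ast$ holds only up to homotopy and propose to construct an associativity homotopy; the paper proves that $\ast$ is \emph{strictly} associative at the chain level, $(x\ast y)\ast z=(-1)^{(m-1)|x|}x\ast(y\ast z)$, by a direct computation using commutativity of $A$, cocommutativity of $\delta$ and the Frobenius compatibility. Your weaker claim still suffices for a BV structure on homology, but you would be constructing a homotopy that can be taken to be zero, and strict associativity is also what makes the paper's chain-level identity $T(x,y\ast z)=T(x,y)\ast z+(-1)^{(m+|x|)|y|}y\ast T(x,z)$ come out on the nose in the Leibniz verification. One small inconsistency to fix: having written the commutator as $x\ast y-(-1)^{|x||y|+m-1}y\ast x$, your bracket should be symmetrized with the sign $(-1)^{|x||y|+m-1}$ (as in the paper's $\{x,y\}=T(x,y)+(-1)^{m-1+|x||y|}T(y,x)$), not $(-1)^{|x||y|+m}$, which is the sign appropriate to the degree-$m$ product $\bullet$ of Section 4.
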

\begin{proof}
Note that the identities above hold because $A$ is an open Frobenius algebra. Before the shift, the product is degree is a chain map and strictly associative of degree $m-1$, that is 
$$
D_{Hoch} (x\ast y)= (-1)^{m-1}( D_{Hoch} (x) \ast y+(-1)^{|x|}x\ast D_{Hoch}( y))
$$ 
and
$$
(x\ast y)\ast z=(-1)^{(m-1)x}(x\ast (y\ast z))
$$
It is noteworthy to mention that  commutativity is used in proving that $\ast$ is a chain map and associative, as it is shown below.  For instance, the term corresponding to first term of the external part of the Hochschild differential $D_{Hoch} (x\ast y)$ is
\begin{equation*}
\begin{split}
&\sum_{(a_0)}(-1)^{|a'_0|+(|a''_0|+1)(|a_1|+\cdots +|a_p| +p)+|a_0'|} a'_0 a_1 [a_2, \dots , a_p ,  a_0''b_0 , b_1 , \dots , b_q]=\\
&\sum_{(a_0)}(-1)^{(|1''|+1)(|a_1|+\cdots +|a_p| +p)+m|a_0|} a_0 1'a_1 [a_2, \dots , a_p ,  1''b_0 , b_1 , \dots , b_q]=\\
&\sum_{(a_0)}(-1)^{(|1''|+1)(|a_2|+\cdots +|a_p| +p-1)+m|a_0|+|a_1||1'|+(|a_1|+1)(|1''|+1)} a_0 a_11' [a_2, \dots , a_p ,  1''b_0 , b_1 , \dots , b_q]=\\
&\sum_{(a_0)}(-1)^{(|1''|+1)(|a_2|+\cdots +|a_p| +p-1)+m(|a_0|+|a_1|)+|a_1|+|1''|+1} a_0 a_11' [a_2, \dots , a_p ,  1''b_0 , b_1 , \dots , b_q]=\\
&\sum_{(a_0)}(-1)^{((a_0 a_1)''|+1)(|a_2|+\cdots +|a_p| +p-1)+|a_1|+|(a_0 a_1)''|+1} (a_0 a_1)' [a_2, \dots , a_p ,  (a_0 a_1)''b_0 , b_1 , \dots , b_q]=\\
&\sum_{(a_0)}(-1)^{((a_0 a_1)''|+1)(|a_2|+\cdots +|a_p| +p-1)+|a_1|+|a_0|+|a_1|+|(a_0 a_1)'|+m+1)} (a_0 a_1)' [a_2, \dots , a_p ,  (a_0 a_1)''b_0 , b_1 , \dots , b_q]=\\
&(-1)^{m-1}\sum_{(a_0)}(-1)^{|(a_0 a_1)'| +(|(a_0 a_1)''|+1)(|a_2|+\cdots +|a_p| +p-1)+|a_0|} (a_0 a_1)' [a_2, \dots , a_p ,  (a_0 a_1)''b_0 , b_1 , \dots , b_q],\\
\end{split}
\end{equation*}
 This is precisely the term which corresponds to first terms corresponding to the external part of the differential in $D_{Hoch}x \ast y$. The commutativity and cocommutativity  (or equivalently, commutativity and  being symmetric) of $A$ is required for the associativity of $\ast$ as well: We have
\begin{equation*}
\begin{split}
&(x\ast y) \ast z = (\sum(-1)^{|a'_0|+(|a''_0|+1)(|x|-|a_0|)} a'_0[ a_1 , \dots , a_p ,  a_0''b_0 , b_1 , \dots , b_q]) \ast  z=\\
&  \sum (-1)^{|a'_0|+(|a''_0|+1)(|x|-|a_0|)+(m+1)|a'_0|+|c_0'|+(|c_0''|+|c_0|+1)(|x|-|a_0|+|a_0''|+|y|+1))}\\
&\qquad \qquad \qquad \qquad \qquad \qquad\qquad\qquad a'_0c_0'[ a_1 , \cdots , a_p ,  a_0''b_0  , \cdots , b_q,c_0'',\cdots, c_r].
\end{split}
\end{equation*}
On the other hand
\begin{equation*}
\begin{split}
&(-1)^{(m-1)|x|}x\ast(y\ast z )=x\ast (\sum (-1)^{(m-1)|x|+ (m+1)|b_0|+|c_0'|+(|c_0''|+|c_0|+1)(|y|+|b_0|))}  \\
&\qquad \qquad \qquad \qquad \qquad \qquad\qquad\qquad b_0c_0'[ b_1  , \cdots , b_q,c_0'',\cdots, c_r]=\\
&  \sum (-1)^{(m-1)|x|+|a'_0|+(|a''_0|+1)(|x|-|a_0|)+(m+1)|b_0|+|c_0'|+(|c_0''|+|c_0|+1)(|y|+|b_0|))} \\
&\qquad \qquad \qquad \qquad \qquad \qquad\qquad\qquad (a'_0[ a_1 , \cdots , a_p ,  a_0''b_0 c_0' ,b_1 \cdots , b_q,c_0'',\cdots, c_r]=\\
&  \sum (-1)^{(m-1)|x|+|c_0'||b_0|+|a'_0|+(|a''_0|+1)(|x|-|a_0|)+(m+1)|b_0|+|c_0'|+(|c_0''|+|c_0|+1)(|y|+|b_0|))} \\
&\qquad \qquad \qquad \qquad \qquad \qquad\qquad\qquad (a'_0[ a_1 , \cdots , a_p ,  a_0''c_0'b_0 , \cdots , b_q,c_0'',\cdots, c_r]=\\
&  \sum (-1)^{(m-1)|x|+|c_0'||b_0|+|(a_0c_0)'|+(|(a_0c_0')''|+|c'_0|+1)(|x|-|a_0|)+(m+1)|b_0|+|c_0'|+(|c_0''|+|c_0|+1)(|y|+|b_0|))} \\
&\qquad \qquad \qquad \qquad \qquad \qquad\qquad\qquad (a_0c_0')'[ a_1 , \cdots , a_p ,  (a_0c_0')''b_0 , \cdots , b_q,c_0'',\cdots, c_r]=\\
&  \sum (-1)^{(m-1)|x|+|c_0'||a_0|+|c_0'||b_0|+|(c_0'a_0)'|+(|(c_0'a_0)''|+|c'_0|+1)(|x|-|a_0|)+(m+1)|b_0|+|c_0'|+(|c_0''|+|c_0|+1)(|y|+|b_0|))} \\
&\qquad \qquad \qquad \qquad \qquad \qquad\qquad\qquad (c_0'a_0)'[ a_1 , \cdots , a_p ,  (c_0'a_0)''b_0 , \cdots , b_q,c_0'',\cdots, c_r]=\\
&  \sum (-1)^{(m-1)|x|+|c_0'||a_0|+m|c_0'| +|c_0'||b_0|+|a_0'|+|c_0'|+(|a_0''|+|c_0'|+1)(|x|-|a_0|)+(m+1)|b_0|+|c_0'|+(|c_0''|+|c_0|+1)(|y|+|b_0|))} \\
&\qquad \qquad \qquad \qquad \qquad \qquad\qquad\qquad c_0'a_0'[ a_1 , \cdots , a_p ,  a_0''b_0 , \cdots , b_q,c_0'',\cdots, c_r]=\\
&  \sum (-1)^{(m-1)|x|+|c_0'||a'_0|+|c_0'||a_0|+m|c_0'| +|c_0'||b_0|+|a_0'|+|c_0'|+(|a_0''|+|c'_0|+1)(|x|-|a_0|)+(m+1)|b_0|+|c_0'|+(|c_0''|+|c_0|+1)(|y|+|b_0|))} \\
&\qquad \qquad \qquad \qquad \qquad \qquad\qquad\qquad a_0'c_0'[ a_1 , \cdots , a_p ,  a_0''b_0 , \cdots , b_q,c_0'',\cdots, c_r]=\\
\end{split}
\end{equation*}
After comparing the  coefficient the associativity follows.

However the commutative holds only up to homotopy, and the homotopy is given by
\begin{equation*}
\begin{split}
T(x,y):= \sum _{i= 0 }^q \sum_{(a_0)}(-1)^{\mu_i}  b_0[ b_1 , \cdots, b_{i} , a_{0}' , a_{1}, \cdots, a_{p}, a_{0}'',  b_{i+1}, \cdots, b_{q}],\\
\end{split}
\end{equation*}
where $\mu_i= (|b_0|+|b_1|\cdots |b_i|+i) (m+|a_0|+|a_1|+\cdots |a_p|+p)+  |a'_0|+(|a_0''|+1) (|a_1|+\cdots |a_p|+p)$.  By doing the computation we  see
that  for $i=0$, the first term of the external differential  of  $D_{Hoch} T(x,y)$ is precisely $x\ast y$, and  for $i=q$ the last term of the external differential $D_{Hoch} T(x,y)$  is 
$-(-1)^{|x||y|+m-1}y\ast x$. For the latter, one has to use the commutativity  and cocommutativity.

To prove that the 7-term relation holds, we adapt once again Chas-Sullivan's \cite{CS1} idea to a simplicial situation. First we define the Gerstenhaber bracket directly 
$$\{x,y\}:=T(x,y)+ (-1)^{m-1+|x||y|} T(y,x)$$ 
Next we prove that the bracket $\{-,-\}$ is homotopic  $B(x\ast y)-(-1)^{m-1} (B x \ast y +(-1)^{|x|} x \ast By)$. 
For that we decompose $B (x\ast y)$ in two pieces:
\begin{equation*}
\begin{split}
B_{1} (x,y):= \sum_{j=1}^{q}  \sum_{(a_0b_0)}\pm1[b_{j+1}, \dots  b_{q}, (a_0b_0)', a_1, \dots, a_p ,  (a_0b_0)'' , b_1 , \dots , b_j],
\end{split}
\end{equation*}
\begin{equation*}
\begin{split}
 B_{2} (x,y):=  \sum_{j=1}^{p}  \sum_{(a_0b_0)}\pm1[a_{j+1},\dots  a_{p}, (a_0b_0)'', b_1, \dots, b_q ,  (a_0b_0)' , a_1 , \dots , a_j],
\end{split}
\end{equation*}
so that  $B=B_1+B_2$.  The homotopy between $T(x ,y)$ and $ B_1(x,y)-(-1)^{m-1+|x|}x\ast B y$ is given by
\begin{equation*}
\begin{split}
H(x,y) &= \sum_{0\leq j\leq i\leq q}  \sum_{(a_0)} (-1)^{s_{i,j}}1 [b_{j+1}, \cdots , b_{i} ,  a_{0}' , a_{1}, \cdots, a_{p}, a_{0}'' , b_{i+1},\cdots , b_q,  b_{0}, \cdots, b_{j}].
\end{split}
\end{equation*}
where $s_{i,j}= (|b_0|+|b_1|\cdots |b_i|+i) (m+|a_0|+|a_1|+\cdots |a_p|+p)+  |a'_0|+(|a_0''|+1) (|a_1|+\cdots |a_p|+p) + (|b_0|+|b_1|+\cdots +|b_j|+i+1) (|b_{j+1}|\cdots |b_i|+m+|a_0|+|a_1|\cdot |a_p|+|b_{i+1}|\cdots |b_q|+ q-j+i+2)$

The terms  in  $D_{Hoch} H-(1)^{m}  H ( D_{Hoch}\otimes 1+ 1\otimes  D_{Hoch})$ corresponding to $j=0$, $j=i$ and $i=q$ are respectively, $T(x,y)$,  $(-1)^{m-1+|x|} x\ast B y$ and $-B_1(x,y)$. 
Similarly for $(-1)^{m-1+|x||y|}T(y,x)$ and $B_2(x\cdot y) -(-1)^{m-1}B x \ast y$.  Therefore we have proved that on $\widetilde{HH}_*(A,A)$  the bracket $\{-,-\}$ is the deviation of $B$ from being a derivation for $\ast$.

Now proving the 7-term relation is equivalent to proving the Leibniz rule for the bracket and the product $\ast$, \textit{i.e.}
$$
\{x,y\ast z\}= \{x,y\}\ast z+ (-1)^{(m+|x|)|y|}y\ast \{x,z\}.
$$
It turns out that at the chain level  $T(x, (y \ast z))= \ T(x, y)\ast z+(-1)^{(m+|x|)|y|}  y\ast T(x, z)$ and $T((y\ast z), x)$ is homotopic to
$(-1)^{|x|(|z|+m-1)}T(y, x) \ast z  + (-1)^{m(|x|-|y|)+|x|}y\ast T(z,x) $ using the homotopy

\begin{eqnarray*}
H_{3}(x,y,z) &=&\sum \pm a_0 [a_1, \dots , a_i, b_0', b_1 , \dots , b_p,b_0'', a_{i+1}\dots , a_j , c_0', c_1, \dots, c_r, c_0'', a_{j+1}, \dots ,a_p].
\end{eqnarray*}
Here $z=c_0[c_1, \dots , c_p]$.  This proves that the Leibniz rule holds up to homotopy.
\end{proof}
\appendix

\section{Ten commandments for signs}  
\begin{numlist}
\item \textbf{Morphism}: A  $\kk$-linear map $f$ of degree $|f|$ between differential $\kk$-modules $A$ and $B$ is said to be a morphism of differential graded $\kk$-modules if 
\begin{equation}\label{eq-diff}
fd_A=(-1)^{|f|}d_Bf 
\end{equation}
\item  \textbf{Tensor product of morphisms}: We use the following sign rule for  the tensor product of graded maps $f \in\Hom_{\kk}(A^{\otimes p} ,M)$ and   $g \in \Hom_{\kk}(A^{\otimes q} ,N)$

\begin{equation} \label{sign1}
(f\otimes g)(a_1\otimes \cdots \otimes a_{p+q})= (-1)^{|g|(|a_1|+\cdots +|a_p|)}f(a_1\otimes \cdots \otimes a_p)\otimes g(a_{p+1}\otimes\cdots \otimes a_{p+q})
\end{equation}
So as a result  the associativity  condition $\mu(\mu\otimes 1)=\mu(1\otimes \mu)$ for a binary operator $\mu:A^{\otimes 2}\to A$  of degree $m$ means that
\begin{equation} \label{eq-asso}
\mu (\mu(a \otimes b)\otimes c))=(-1)^{ma}\mu(a,\mu(b\otimes c))
\end{equation}

\item \textbf{Tensor product of algebra:} If $A$ and $B$ are differential $\kk$-modules  then $A\otimes B$ is also a differential $\kk$-module whose differential is give by $d_A\otimes 1+1\otimes d_B$.  If $A$ and $B$ are differential graded algebras then $A\otimes B$ is also a differential graded algebra whose product is defined by 
 $$(a\otimes x)(b\otimes y)=(-1)^{|b||x|}ab\otimes xy.$$

An important case is when $B=A^{op}$. The differential $A\otimes A^{op}$-modules are precisely differential $A$-bimodules.

\item \textbf{Differential of the dual}: The dual $\kk$-module $A^\vee=\hom_{\kk}(A,\kk)$ is negatively graded \emph{i.e.}
$A^\vee_{-i}= \Hom_{\kk}(A_i,\kk)$ and equipped with the differential $d^\vee$ which is defined by $d_{A^\vee}(\alpha)(x)=-(-1)^{|x|}\alpha(d_A(x))=(-1)^{|\alpha|}\alpha(d_A(x))$, $\alpha \in A^\vee$, and  also of degree one.  
Our choice of sign makes the evaluation map $ev: A\otimes A^\vee \to \kk$ a chain map of degree zero. 
We apply the same rule for a general $A$-bimodule $M$. That is  $M^\vee_{-i}=\hom_{\kk}(M_i,\kk)$ is equipped with the differential $d_{M^\vee}\phi=(-1)^{|\phi|}\phi \circ d_M$.

\item \textbf{Transpose of morphisms}: If $f:A\to B$ is a homogeneous morphism of differential graded $\kk$-modules then the induced map  $f^\vee$ on the $\kk$-duals is defined by 
\begin{equation*}
f^\vee(\phi)= (-1)^{|\phi||f|} f \circ \phi
\end{equation*}

Using the previous sign conventions $f^\vee$ is also a (homogeneous) morphism of differential graded $\kk$-module of degree $|f|$.

\item  \textbf{ Module structure of the dual}: There is a natural $A$ right and left $A$-module structure on $A^\vee$ give by $x.\alpha : y\to \alpha(yx)$ and $\alpha.x:y\to (-1)^{|x|}\alpha (xy)$. The maps $(x,\alpha)\to x.\alpha$ and  $(x,\alpha)\to \alpha .x$  are chain maps.  

We apply the same rule for a general $A$-bimodule $M$. That is  $M^\vee$ is equipped with the $A$-bimodule structure $(x.\alpha)(y):=\alpha(yx)$ and  $(\alpha.x)(y):=(-1)^{|x|}\alpha (xy)$, where $\alpha \in M^\vee$.

\item  \textbf{Dual of tensor product}:  For each pair of right $A$-module $M$ and left $A$-module $ N$, note that $M\otimes N$ is an $A$-bimodule, so is $(M\otimes N)^\vee$.
There is a natural inclusion of $A$-bimodules the inclusion  $i_{N,M:}N^\vee\otimes M^\vee \hookrightarrow (M\otimes N)^\vee  $ given by $\phi_1\otimes \phi_2\to (-1)^{|\phi_1||\phi_2|}\phi_2\otimes \phi_1$. 

\item  \textbf{Transpose and tensor product}: If $f: A^{\otimes l}\to A^{\otimes k}$ is graded  (homogenous) morphism  of degree  $|f|$ then then adjoint map $f^\vee: (A^\vee )^{\otimes k} \to (A^\vee )^{\otimes l} $ is  defines  by
\begin{equation} \label{sign2}
f^\vee (\phi_1\otimes \cdots \otimes \phi_k)=(-1)^{(|\phi_1|+\cdots +|\phi_k|) |f|}  (\phi_1\otimes \cdots \otimes \phi_k)\circ  f.
\end{equation}

\item \textbf{Shift of Grading }: The shift of the degree by $m$  to the right is denoted by $s_{m}:A\to A$ and $\deg (s_{m}(a))= \deg (a) + m$, or in other words $A[m]_k=A_{k-m}$. Using the shift operation one can  pullback other operations, for instance a product $\mu: A\otimes A\to A$  is pulled back to 
\begin{equation} \label{sign3}
\mu_{m} :=s_{m}^*(\mu)= s_{m} \circ \mu \circ (s_{m}^{-1} \otimes s_m^{-1}),
\end{equation}
or more explicitly $\mu_m(s_{m}(a),s_{m}(b))=(-1)^{m|a|} s_{m}\mu(a,b)$.

So if $\mu$ is of degree  $m$ then $\mu_m$ is of degree zero and  $\mu_m$ is associative (of degree zero)  product on  $A[m]$ if $\mu$ is associative  (of degree $m$) 
\emph{i.e.}
$$
\mu(\mu\otimes 1)=\mu(1\otimes  \mu)
$$

$$
\mu(\mu(a,b)c)=(-1)^{m|a|+m}\mu(a, \mu(b,c))
$$
The commutativity condition for $\mu'$ is equivalent to  $\mu=(-1)^{m}\mu\circ \tau$ where $t:A\otimes A\to A\otimes A$ is given by $\tau(x\otimes y)=(-1)^{|x||y|}y\otimes x$.

Similarly the coassociativity rule for  coprodcut $\delta$ of degree $m$ is obtained by writing down the usual associative rules of  the the degree zero coproduct $\delta'= (s_{-m}\otimes s_{-m} )\delta s^{-1}_{-m}$, which translates to 
$$
(\delta\otimes 1)\delta=(-1)^m(1\otimes \delta)\delta.
$$

Using the same argument the equations defining a degree $m$  (right and left) counit  for $\delta$ are  $\id= \sim \circ (\eta \otimes 1)\delta'= \sim \circ (1 \otimes \eta)\delta'$ where $\sim$ stands for the natural isomorphisms
$ A\otimes _\kk\kk \simeq A$ and $ \kk \otimes _\kk A \simeq A$. Said explicitly  $x=\sum_{(x)}  (-1)^{m|x'|}\eta(x')x''=\sum_{(x)}  (-1)^{m|x'|}\eta(x'')x'$.  

The cocommutativity condition  for $\delta'$ becomes  $\delta=(-1)^m\tau\delta$.

\item \textbf{Grading shift and derivations} A degree $|D|$ derivation $D:A\to A$ for a degree $m$ bilinear map $\mu: A\otimes A\to$ is $\kk$-linear map which satisfies the identity
$$
D\mu=(-1)^{|D||\mu|}\mu (D\otimes 1+1\otimes D) 
$$
After a shift  of degree to the right, $ D$ is a still derivation of degree $|D|$  on $A[m]$ with respect to the degree zero binary operation $\mu_m$. In particular if  $|D|=1$, $D^2=0$ and $\mu$ is associative then  $(A[m], \mu_m,D)$ is a differential graded associative algebra.
\end{numlist}
\bibliography{Hoch-Op-Frob}{}

\bibliographystyle{amsalpha}

\end{document}